\newcommand{\wrt}{with respect to}
\newcommand{\m}{\mathfrak{m} }
\newcommand{\E}{\mathcal{E}}
\newcommand{\Z}{\mathbb{Z} }
\newcommand{\bP}{\mathbb{\partial}}
\newcommand{\rt}{\rightarrow}
\newcommand{\ov}{\overline}
\newcommand{\F}{\mathcal{F} }
\newcommand{\G}{\mathcal{G} }
\newcommand{\K}{\mathbb{K} }
\theoremstyle{plain}
\newtheorem{thm}{Theorem}
\newtheorem{theorem}{Theorem}[section]
\newtheorem{lemma}[theorem]{Lemma}
\newtheorem{proposition}[theorem]{Proposition}
\theoremstyle{definition}
\newtheorem{remark}[theorem]{Remark}
\newtheorem{example}[theorem]{Example}
\theoremstyle{remark}
\numberwithin{equation}{theorem}
\begin{document}

\title[De Rahm]{De Rahm cohomology of local cohomology modules-The graded case}
 \author{Tony~J.~Puthenpurakal}
\date{\today}
\address{Department of Mathematics, Indian Institute of Technology Bombay, Powai, Mumbai 400 076}

\email{tputhen@math.iitb.ac.in}

\subjclass{Primary 13D45; Secondary 13N10 }
\keywords{local cohomology, associated primes, D-modules, Koszul homology}

\begin{abstract}
Let $K$ be a field of characteristic zero, $R = K[X_1,\ldots,X_n]$.  Let $A_n(K)  = K<X_1,\ldots,X_n, \partial_1, \ldots, \partial_n>$  be the $n^{th}$ Weyl algebra over $K$.  We consider the case when $R$ and $A_n(K)$ is graded by giving $\deg X_i = \omega_i $ and $\deg \partial_i = -\omega_i$  for $i =1,\ldots,n$ (here $\omega_i$ are positive integers).  Set $\omega = \sum_{k=1}^{n}\omega_k$. Let $I$ be a graded ideal in $R$. By a result due to Lyubeznik  the local cohomology modules $H^i_I(R)$ are holonomic $A_n(K)$-modules for each $i \geq 0$.
In this article we prove that  the De Rahm cohomology modules $H^*(\bP ; H^*_I(R))$ is concentrated in degree 
$- \omega$, i.e., $H^*(\bP ; H^*_I(R))_j = 0$
for $j \neq - \omega$. As an application when $A = R/(f)$ is an isolated singularity  we relate $H^{n-1}(\bP ; H^1_{(f)}(R)$ to  $H^{n-1}(\partial(f); A)$,  the $(n-1)^{th}$ Koszul cohomology of $A$ \wrt \ $ \partial_1(f),\ldots,\partial_n(f)$.
\end{abstract}

\maketitle

\section*{Introduction}
Let $K$ be a field of characteristic zero and let $R = K[X_1,\ldots,X_n]$.  
We consider $R$ graded with $\deg X_i  = \omega_i$ for $i = 1,\ldots, n$; here $\omega_i$ are positive integers.  Set
$\m = (X_1,\ldots,X_n)$.
Let $I$ be a graded ideal in $R$. The local cohomology modules $H^*_I(R)$ are clearly graded $R$-modules. Let
$A_n(K)  = K<X_1,\ldots,X_n, \partial_1, \ldots, \partial_n>$  be the $n^{th}$ Weyl algebra over $K$. By a result due to Lyubeznik, see \cite{Ly}, the local cohomology modules $H^i_I(R)$ are \textit{holonomic} $A_n(K)$-modules for each $i \geq 0$.  We can consider $A_n(K)$ graded by giving $\deg \partial_i = -\omega_i$ for 
 $i = 1,\ldots, n$.
 
 Let $N$ be a graded left $A_n(K)$ module. Now $\bP = \partial_1,\ldots,\partial_n$ are pairwise commuting $K$-linear maps. So we can consider the De Rahm complex
$K(\bP;N)$. Notice that the De Rahm cohomology modules  $H^*(\bP;N)$ are in general only \textit{graded} $K$-vector spaces. They are finite dimensional if $N$ is holonomic; \cite[Chapter 1, Theorem 6.1]{B}.
 In particular $H^*(\bP;H^*_I(R))$ are finite dimensional graded $K$-vector spaces. 
 
 Our first result is
 \begin{thm}\label{qE}
 Let $I$ be a graded ideal in $R$. Set $ \omega = \sum_{i=1}^{n} \omega_i$. Then the De Rahm cohomology modules $H^*(\partial_1,\ldots,\partial_n ; H^*_I(R))$ is concentrated in degree $-\omega$, i.e., 
$$H^*(\partial_1,\ldots,\partial_n ; H^*_I(R))_j = 0, \quad 
\text{for $j \neq - \omega$}.$$ 
 \end{thm}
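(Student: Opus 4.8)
The plan is to argue, for each individual module $N=H^i_I(R)$, directly with the De Rham complex $K(\partial_1,\dots,\partial_n;N)$, viewed as the Koszul complex of the pairwise commuting $K$-linear operators $\partial_1,\dots,\partial_n$ acting on $N$. One gives the exterior generator dual to $\partial_k$ the internal degree $-\omega_k$; with this convention the differential is homogeneous of degree $0$, so each $H^j(\partial_1,\dots,\partial_n;N)$ is a graded $K$-vector space, and the statement is equivalent to saying that the internal grading operator $\mathcal{E}$ (multiplication by $j$ on the degree-$j$ component) acts on every $H^j(\partial_1,\dots,\partial_n;N)$ as multiplication by $-\omega$. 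Equivalently, $\mathcal{E}+\omega$ annihilates the cohomology.

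First I would isolate a lemma: for $N=H^i_I(R)$ the Euler operator $\mathcal{E}_R:=\sum_{k=1}^{n}\omega_k X_k\partial_k\in A_n(K)$ acts on $N$ \emph{as the internal grading operator}, i.e.\ as multiplication by $j$ on $N_j$. One verifies this on $R$ itself by evaluating $\sum_k\omega_k X_k\partial_k$ on a monomial, and on a localization $R_f$ with $f$ homogeneous by the quotient rule, which gives $\sum_k\omega_k X_k\partial_k(g/f^t)=(\deg g-t\deg f)(g/f^t)$. Since $H^i_I(R)$ is computed by the \v{C}ech complex on homogeneous generators of $I$ --- a complex of graded $A_n(K)$-modules, with homogeneous $A_n(K)$-linear differentials, whose terms are finite direct sums of such localizations --- and since a homogeneous operator that restricts to the grading operator on each term of such a complex induces the grading operator on cohomology, the claim passes to the subquotient $H^i_I(R)$.

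The crux is a homotopy built from the $X_k$'s. Writing $e_1,\dots,e_n$ for the exterior generators, define a $K$-linear map $s$ of internal degree $0$ on $K(\partial_1,\dots,\partial_n;N)$ by $s(v\otimes e_J)=\sum_{k=1}^{n}\omega_k (X_k v)\otimes(e_k\wedge e_J)$, the summands with $k\in J$ being zero. A direct computation using only $\partial_k X_k=X_k\partial_k+1$ (the mixed terms in $sd$ and $ds$ cancelling because $[\partial_j,X_k]=0$ for $j\neq k$) shows that on the component $N\otimes e_J$,
\[
 d\,s+s\,d \;=\; \Big(\sum_{k=1}^{n}\omega_k X_k\partial_k\Big)\;+\;\omega\;-\;\sum_{k\in J}\omega_k .
\]
By the Lemma, an element $v\otimes e_J$ with $v\in N_m$ --- which lies in internal degree $j=m-\sum_{k\in J}\omega_k$ --- is sent by $ds+sd$ to $(m+\omega-\sum_{k\in J}\omega_k)\,v\otimes e_J=(j+\omega)\,v\otimes e_J$. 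Thus $ds+sd$ equals the internal degree operator shifted by $\omega$; but, being of the form $ds+sd$, it acts as $0$ on $H^*(\partial_1,\dots,\partial_n;N)$. Hence multiplication by $j+\omega$ kills $H^*(\partial_1,\dots,\partial_n;N)_j$, which forces $H^*(\partial_1,\dots,\partial_n;N)_j=0$ for every $j\neq-\omega$.

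I expect the only genuine work to be bookkeeping: choosing the signs in the definition of $s$ so that the mixed terms cancel and the surviving operator is precisely $\mathcal{E}_R+\omega-\sum_{k\in J}\omega_k$, and stating carefully the descent step in the Lemma (that a homogeneous endomorphism of a complex of graded $A_n(K)$-modules which restricts to the grading operator on each term induces the grading operator on cohomology). Once the Lemma and the homotopy identity are in place, the theorem follows at once.
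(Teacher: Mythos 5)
Your argument is correct, and it takes a genuinely different route from the paper. You use only the honest Eulerian property of $N=H^i_I(R)$ (this is Ma--Zhang \cite[5.3]{MZ}, which your \v{C}ech-complex computation on graded localizations reproves) and then kill all off-degrees at once with a Cartan-type homotopy: with the homological (contraction) differential and $s(v\otimes e_J)=\sum_{k}\omega_k X_kv\otimes e_k\wedge e_J$, the mixed terms cancel since $[\partial_j,X_k]=0$ for $j\neq k$, and on $N\otimes e_J$ one gets $ds+sd=\sum_{k\notin J}\omega_k\partial_kX_k+\sum_{k\in J}\omega_kX_k\partial_k=\mathcal{E}_n+\omega-\sum_{k\in J}\omega_k$, which on the internal-degree-$j$ part of the complex is multiplication by $j+\omega$; being null-homotopic it acts as zero on homology, so $H_*(\partial;N)_j=0$ for $j\neq-\omega$ (characteristic zero is used to invert $j+\omega$). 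The paper instead argues by descending induction on the number of variables: it needs the notion of generalized Eulerian modules (Proposition \ref{exact}, Propositions \ref{base} and \ref{base-2}), the two-term exact sequence of Lemma \ref{Koszul}, and the one-variable computation Proposition \ref{n1}, culminating in Theorem \ref{gEv}. Your proof is shorter and bypasses that machinery for the theorem at hand; moreover it extends verbatim to generalized Eulerian modules, since $(\mathcal{E}_n-|v|)^a v=0$ makes $(ds+sd)-(j+\omega)$ locally nilpotent in degree $j$ while $ds+sd$ still induces zero on homology, so it recovers Theorem \ref{gEv} in full as well. What the paper's route buys is the intermediate statements about partial de Rham homology of (generalized) Eulerian modules, which fit the inductive framework used elsewhere in the paper; note also that, like the paper, you identify $H^i$ with $H_{n-i}$ without tracking a possible degree twist by the top exterior power, which is harmless here because you adopt exactly the paper's grading convention ($e_k$ in internal degree $-\omega_k$, differential of degree zero).
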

 We give an application of Theorem \ref{qE}. Let $f$ be a homogenous polynomial in $R$ with $A = R/(f)$ an isolated singularity, i.e., $A_P$ is regular for all homogeneous prime ideals $P \neq \m$. Let  $H^i(\partial(f); A)$ be the $i^{th}$ Koszul cohomology of $A$ \wrt \ $ \partial_1(f),\ldots,\partial_n(f)$. We show
\begin{thm}\label{thm2}
(with hypotheses as above) There exists a filtration $\F = \{ \F_\nu \}_{\nu\geq 0}$ consisting of $K$-subspaces of $H^{n-1}(\bP; H^1_{(f)}(R))$ with $\F_\nu = H^{n-1}(\bP; H^1_{(f)}(R))$ for $\nu \gg 0$, $\F_\nu \supseteq F_{\nu-1}$ and $\F_0 = 0$ and injective $K$-linear maps
\[
\eta_\nu \colon \frac{\F_\nu}{\F_{\nu-1}} \longrightarrow H^{n-1}\left(\partial(f); A\right)_{(\nu+1)\deg f - \omega}.
\]
\end{thm}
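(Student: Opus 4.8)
The plan is to reduce everything to the single nonvanishing local cohomology module and then to filter its De Rham complex by pole order.

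\emph{Step 1: reduction, and the algebra of the partials.} Since $(f)$ is principal with $f$ a nonzerodivisor, $H^i_{(f)}(R)=0$ for $i\neq 1$ and $M:=H^1_{(f)}(R)=R_f/R$, a graded holonomic $A_n(K)$-module; so I must produce the filtration on $H^{n-1}(\bP;M)$. Differentiating the weighted Euler identity $\sum_i\omega_iX_i\partial_i(f)=(\deg f)f$, together with $\operatorname{char}K=0$, gives $f\in\mathfrak{c}:=(\partial_1(f),\dots,\partial_n(f))R$; the isolated singularity hypothesis (in the essential case $\mathfrak{c}\neq R$, i.e. $A$ genuinely singular) forces $\sqrt{\mathfrak{c}}=\m$, whence $\partial_1(f),\dots,\partial_n(f)$ is a regular sequence in the \CM\ ring $R$. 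Consequently $H^i(\partial(f);R)=0$ for $i<n$ and $H^n(\partial(f);R)=R/\mathfrak{c}$; since multiplication by $f\in\mathfrak{c}$ is null-homotopic on the Koszul cochain complex of $\partial(f)$ over $R$, the exact sequence $0\to R\xrightarrow{f}R\to A\to 0$ then yields $H^i(\partial(f);A)=0$ for $i\leq n-2$ and $H^{n-1}(\partial(f);A)\cong R/\mathfrak{c}$ up to a degree shift. This low-degree vanishing is the key input for injectivity.

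\emph{Step 2: the pole-order filtration and its associated graded.} Set $F_k:=(f^{-k}R+R)/R\subseteq M$ for $k\geq 0$, so $F_0=0$, $F_k\subseteq F_{k+1}$, $\bigcup_kF_k=M$, and $F_k/F_{k-1}\cong A$ up to a shift by $k\deg f$. From $\partial_i(gf^{-k})=(\partial_ig)f^{-k}-kg(\partial_if)f^{-k-1}$ one gets $\partial_i(F_k)\subseteq F_{k+1}$, with induced map $F_k/F_{k-1}\to F_{k+1}/F_k$ equal to multiplication by the \emph{nonzero} scalar $-k$ (char $0$ again) times $\overline{\partial_i(f)}$. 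I would then filter the De Rham complex by the subcomplexes $\mathcal{K}_\nu$, where $\mathcal{K}_\nu^p$ is the part of $K^p(\bP;M)$ of pole order at most $\nu+p-n$: this satisfies $d(\mathcal{K}_\nu)\subseteq\mathcal{K}_\nu$, $\mathcal{K}_\nu\subseteq\mathcal{K}_{\nu+1}$, $\bigcup_\nu\mathcal{K}_\nu=K(\bP;M)$ and $H^{n-1}(\mathcal{K}_\nu)=0$ for $\nu\leq 1$, and, by the computation above, $\mathcal{K}_\nu/\mathcal{K}_{\nu-1}$ is — up to the invertible scalars and an internal degree shift — a stupid truncation of the Koszul cochain complex of $A$ with respect to $\partial_1(f),\dots,\partial_n(f)$, so its cohomology is the corresponding (truncated) Koszul cohomology.

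\emph{Step 3: the filtration $\F_\nu$ and the maps $\eta_\nu$.} Put $\F_\nu:=\image\big(H^{n-1}(\mathcal{K}_\nu)\to H^{n-1}(\bP;M)\big)$. Then $\F_0=0$ and $\F_\nu\supseteq\F_{\nu-1}$; since $H^{n-1}(\bP;M)$ is finite dimensional by holonomicity (\cite[Ch.~1, Thm.~6.1]{B}) and the $\mathcal{K}_\nu$ exhaust $K(\bP;M)$, $\F_\nu=H^{n-1}(\bP;M)$ for $\nu\gg 0$. The spectral sequence of the filtered complex has $E_1$-page the cohomology of the associated graded — truncated Koszul cohomology of $A$, suitably shifted — and abuts to $H^{n-1}(\bP;M)$, so each $\F_\nu/\F_{\nu-1}$ is a subquotient of $H^{n-1}(\mathcal{K}_\nu/\mathcal{K}_{\nu-1})$. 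By Theorem \ref{qE}, $H^{n-1}(\bP;M)=H^{n-1}(\bP;M)_{-\omega}$, so $\F_\nu/\F_{\nu-1}$ sits in internal degree $-\omega$; chasing the degree shifts then places the relevant graded piece inside $H^{n-1}(\partial(f);A)_{(\nu+1)\deg f-\omega}$, and $\eta_\nu$ is the induced map.

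\emph{Step 4: injectivity — the main obstacle.} It remains to prove that $\eta_\nu$ is actually \emph{injective}, i.e. that $\F_\nu/\F_{\nu-1}$ is an honest subspace of $H^{n-1}(\partial(f);A)_{(\nu+1)\deg f-\omega}$ rather than merely a subquotient. Concretely this amounts to showing that a De Rham coboundary of pole order $\leq\nu$ is the differential of an element of pole order $\leq\nu$, and that its residue in the associated graded is a Koszul coboundary — equivalently, that every spectral-sequence differential entering the $(\nu,n-1)$-spot vanishes. This is precisely the algebraic Griffiths–Dwork pole-order reduction: lowering the pole order of a cocycle or coboundary one step at a time is obstructed only by $H^{<n-1}(\partial(f);A)$, which vanishes by Step~1, while the scalars from differentiating $f^{-k}$ are invertible, so the reduction terminates. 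I expect the one delicate point to be the smallest $\nu$ with $\F_\nu\neq 0$, where the associated graded is a single truncated piece and $H^{n-1}(\mathcal{K}_\nu/\mathcal{K}_{\nu-1})=Z^{n-1}(\partial(f);A)$ (Koszul cocycles) rather than Koszul cohomology; there I would argue directly that $H^{n-1}(\mathcal{K}_\nu)\hookrightarrow Z^{n-1}(\partial(f);A)$ and that its image meets the Koszul coboundaries only in classes already killed in $H^{n-1}(\bP;M)$, again using the Step~1 vanishing. Beyond this, the argument is bookkeeping with the filtration indices and the internal grading.
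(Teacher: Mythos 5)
Your Steps 1--3 are essentially the right setup, and in fact your spectral-sequence packaging is a repackaging of what the paper does by hand: the paper works with normal forms, the invariant $L(x)=\min\{\text{pole order of a cocycle representing }x\}$, and an explicit numerator map $\theta$, and its pole-order reduction (in Proposition \ref{B} and in the injectivity argument) is exactly your Griffiths--Dwork reduction, with the same two inputs: the vanishing $H^{i}(\partial(f);A)=0$ for $i\le n-2$ (equivalently $H_2(\partial(f);A)=0$), which you correctly derive from the Euler relation plus the isolated singularity, and the concentration in internal degree $-\omega$ from Theorem \ref{qE}. One bookkeeping slip: with your convention $\mathcal{K}_\nu^{p}=\{\text{pole order}\le \nu+p-n\}$, the piece $\F_\nu/\F_{\nu-1}$ is represented by cocycles of pole order $\nu-1$, whose numerators land in $H^{n-1}(\partial(f);A)_{\nu\deg f-\omega}$, so you must reindex to match the stated target degree $(\nu+1)\deg f-\omega$.

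The genuine gap is in Step 4, and it sits exactly where the paper has to work. The claim that injectivity follows because ``every spectral-sequence differential entering the $(\nu,n-1)$-spot vanishes, obstructed only by $H^{<n-1}(\partial(f);A)=0$'' is correct only at filtration levels where the incoming $E_1$-terms are honest Koszul cohomology $H^{n-2}(\partial(f);A)$. At the lowest nonzero level (pole order one in cohomological degree $n-1$) the associated graded is a stupid truncation: the term mapping into your spot is $Z^{n-2}(\partial(f);A)=B^{n-2}(\partial(f);A)$, which is in general nonzero, the spot itself is $Z^{n-1}(\partial(f);A)$ rather than $H^{n-1}(\partial(f);A)$, and the rewriting $b\,\partial_i f/f^{c}=\partial_i\bigl(b/((1-c)f^{c-1})\bigr)+\ast/f^{c-1}$ degenerates precisely at $c=1$ because the scalar $1-c$ vanishes. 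So at that level the vanishing of $H^{n-2}(\partial(f);A)$ buys you nothing, and your proposed fix (``again using the Step 1 vanishing'') is the wrong ingredient, both for well-definedness (the image of the incoming $d_1$ must consist of Koszul coboundaries) and for injectivity. What closes this case in the paper is the weighted-degree count made possible by Theorem \ref{qE}: since every class lives in internal degree $-\omega$, a pole-order-one cocycle $(a_1/f,\dots,a_n/f)^\prime$ has $\deg a_l=\deg f-\sum_{k\neq l}\omega_k$, and comparing degrees in $a_l=\sum_{i<l}b_{il}\,\partial f/\partial X_i-\sum_{j>l}b_{lj}\,\partial f/\partial X_j+d_lf$ (where $\deg b_{ij}=\omega_i+\omega_j-\omega$) forces $a_l=0$, so such a class is zero; the same kind of degree argument (plus a direct $n=3$ computation) is needed in the $c=1$ case of Proposition \ref{B}. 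Until you supply this low-pole-order argument, the injectivity of $\eta_\nu$ at the bottom of the filtration --- which is the heart of the theorem --- is not proved; with it, your approach goes through.
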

The techniques in this theorem is generalized in \cite{PR2} to show that $H^i(\bP; H^1_{(f)}(R)) = 0$ for $i < n-1$.
There is no software to compute De-Rahm cohomology of a $A_n(K)$-module $M$. As an application of Theorem 2 we prove
\begin{example}
Let $R = K[X_1,\ldots,X_n]$ and let $f = X_1^2 + X_2^2 + \ldots + X_{n-1}^{2} + X_n^m$ with $m \geq 2$. 
Then
\begin{enumerate}
\item
if $m$ is odd then
$H^{n-1}(\partial; H^1_{(f)}(R)) = 0$.
\item
if $m$ is even then
\begin{enumerate}
\item
If $n$ is odd then $H^{n-1}(\partial; H^1_{(f)}(R)) = 0$.
\item
If $n$ is even then $\dim_K H^{n-1}(\partial; H^1_{(f)}(R)) \leq 1$.
\end{enumerate}
\end{enumerate}
\end{example}
We now describe in brief the contents of the paper. In section one we discuss a few preliminaries that we need. In section two we introduce the concept of generalized Eulerian modules. In section three we give a proof of Theorem 1. In section four we give an outline of proof of Theorem 2. In section five we prove Theorem 2. In section six we give a proof of Example 0.1.
\section{Preliminaries}
 In  this section  we discuss a few preliminary results that we need.
\begin{remark}
Although all the results are stated for De-Rahm cohomology of a $A_n(K)$-module $M$, we will actually work with
De-Rahm homology. Note that $H_i(\bP, M) = H^{n-i}(\bP, M)$ for any $A_n(K)$-module $M$. Let $S = K[\partial_1,\ldots,\partial_n]$. Consider it as a subring of $A_n(K)$. Then note that $H_i(\bP, M)$ is the $i^{th}$ Koszul homology module of $M$ with respect to $\bP$.
\end{remark} 
\s \label{mod-1}
Let $M$ be a holonomic $A_n(K)$-module. Then for $i = 0,1$ the De-Rahm  homology modules $H_i(\partial_n,M)$ are holonomic $A_{n-1}(K)$-modules, see \cite[1.6.2]{B}. 
 
The following result is well-known.
  \begin{lemma}\label{Koszul}
  Let $\bP = \partial_r, \partial_{r+1},\ldots,\partial_{n}$ and $\bP^\prime = \partial_{r+1},\ldots,\partial_{n}$. Let $M$ be a left $A_{n}(K)$-module. For each $i \geq 0$ there exist an exact sequence
  \[
  0 \rt H_0( \partial_r ; H_i(\bP^\prime;M)) \rt H_i(\bP; M) \rt H_1(\partial_r ; H_{i-1}(\bP^\prime ; M)) \rt 0.
  \]
  \end{lemma}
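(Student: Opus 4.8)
The plan is to identify the Koszul complex $K(\bP; M)$ with a mapping cone and to read off the asserted short exact sequences from the long exact homology sequence of that cone. Since the Koszul complex is independent of the order of the sequence, I would reorder and treat $\partial_r$ as the element adjoined last, so that
\[
K(\bP; M) = \operatorname{Cone}\bigl(\partial_r \colon K(\bP^\prime; M) \to K(\bP^\prime; M)\bigr),
\]
where the chain map is multiplication by $\partial_r$. This makes sense because $\partial_r$ commutes with $\partial_{r+1},\ldots,\partial_n$ and is therefore a chain endomorphism of $K(\bP^\prime; M)$; equivalently, there is a short exact sequence of complexes $0 \rt K(\bP^\prime; M) \rt K(\bP; M) \rt K(\bP^\prime; M)[1] \rt 0$, where $[1]$ denotes a shift in homological degree.

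First I would pass to the long exact sequence in homology attached to this mapping cone. The connecting homomorphism $H_i(\bP^\prime; M) \to H_i(\bP^\prime; M)$ is, up to sign, the map induced by multiplication by $\partial_r$ (which descends to $H_\bullet(\bP^\prime; M)$ by the commutativity just noted), so the sequence reads
\[
\cdots \rt H_i(\bP^\prime; M) \xrightarrow{\ \partial_r\ } H_i(\bP^\prime; M) \rt H_i(\bP; M) \rt H_{i-1}(\bP^\prime; M) \xrightarrow{\ \partial_r\ } H_{i-1}(\bP^\prime; M) \rt \cdots .
\]
Breaking this into short exact pieces in the usual way yields
\[
0 \rt \coker\bigl(\partial_r \colon H_i(\bP^\prime; M) \to H_i(\bP^\prime; M)\bigr) \rt H_i(\bP; M) \rt \ker\bigl(\partial_r \colon H_{i-1}(\bP^\prime; M) \to H_{i-1}(\bP^\prime; M)\bigr) \rt 0.
\]

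Finally, for any module $N$ over $K[\partial_r] \subseteq A_n(K)$ the complex $K(\partial_r; N)$ is just $N \xrightarrow{\partial_r} N$, so $H_0(\partial_r; N) = \coker(\partial_r \colon N \to N)$ and $H_1(\partial_r; N) = \ker(\partial_r \colon N \to N)$. Applying this with $N = H_i(\bP^\prime; M)$ and $N = H_{i-1}(\bP^\prime; M)$ converts the displayed sequence into the one claimed in the lemma; for $i = 0$ one uses $H_{-1}(\bP^\prime; M) = 0$. I do not expect a genuine obstacle here: the only points that require a little care are verifying that $\partial_r$ induces a well-defined operator on $H_\bullet(\bP^\prime; M)$ (immediate from the pairwise commutativity of the $\partial_j$) and noting that the mapping-cone connecting map is multiplication by $\partial_r$ up to a sign which is irrelevant for kernels and cokernels.
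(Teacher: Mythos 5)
Your proof is correct: the identification of $K(\bP;M)$ with the mapping cone of $\partial_r$ on $K(\bP^\prime;M)$, the resulting long exact sequence with connecting map induced by $\partial_r$, and the splitting into short exact sequences via $H_0(\partial_r;N)=\coker(\partial_r)$ and $H_1(\partial_r;N)=\ker(\partial_r)$ is exactly the standard argument. The paper itself gives no proof, stating the lemma as well-known, so your write-up simply supplies the canonical justification it is implicitly invoking; the only points needing care (that $\partial_r$ commutes with $\bP^\prime$ so it acts on $H_\bullet(\bP^\prime;M)$, and that the sign on the connecting map is harmless) are ones you address.
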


\section{generlized Eulerian modules}
Consider the Eulerain operator
\[
\E_n = \omega_1X_1\partial_1 + \omega_2X_2\partial_2 + \cdots + \omega_nX_n\partial_n.
\]
If $r \in R$ is homogenous then recall that $\E_n r = (\deg r)\cdot r$. Note that degree of $\E_n$ is zero.

 Let $M$ be a graded $A_n(K)$ module.
If $m$ is homogenous, we set $|m| = \deg m$
 We say
that $M$ is \textit{Eulerian} $A_n(K)$-module if $\E_n m = |m|\cdot m$ for each homogenous $m \in M$. This notion was discovered by  Ma and Zhang, see  the very nice paper \cite{MZ}. They prove that local cohomology modules $H^*_I(R)$ are Eulerian $A_n(K)$-modules, see \cite[5.3]{MZ}. Infact they prove this  when $R$ is standard graded. The same proof can be adapted to prove the general case. 

It can be easily seen that if $M$ is Eulerian $A_n(K)$ module then so are each graded submodule  and graded quotient of $M$. However extensions of Eulerian modules need not be Eulerian, see \cite[3.5]{MZ}. To rectify this we introduce the following notion: A graded $A_n(K)$-module $M$ is said to be\textit{ generalized Eulerian} if for a homogenous element 
$m$ of $M$ there exists a positive integer $a$, (here $a$ may depend on $m$) such that
\[
\left(\E_n - |m|\right)^a m = 0.
\]
We now prove that the class of generalized Eulerian modules is closed under extensions.
\begin{proposition}\label{exact}
Let $0 \rt M_1 \xrightarrow{\alpha_1} M_2 \xrightarrow{\alpha_2} M_3 \rt 0$ be a short exact sequence of graded $A_n(K)$-modules.
Then the following are equivalent:
\begin{enumerate}[\rm (1)]
\item
$M_2$ is generalized Eulerian.
\item
$M_1$ and $M_3$ is generalized Eulerian.
\end{enumerate}
\end{proposition}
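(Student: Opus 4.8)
The plan is a direct diagram chase whose only real ingredient is that $\E_n$ is an element of $A_n(K)$ of degree zero. Consequently, for every scalar $c \in K$ the operator $\E_n - c$ commutes with any $A_n(K)$-linear map and carries the degree-$j$ component of any graded $A_n(K)$-module into itself. I will use repeatedly the standard fact that a surjective graded homomorphism of graded modules is surjective in each degree, so homogeneous elements lift to homogeneous elements of the same degree.

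First I would prove $(1) \Rightarrow (2)$. For $M_1$: given homogeneous $m \in M_1$ with $|m| = j$, the element $\alpha_1(m) \in M_2$ is homogeneous of degree $j$, so $(\E_n - j)^a \alpha_1(m) = 0$ for some $a$; since $\alpha_1$ is $A_n(K)$-linear and injective, $(\E_n - j)^a m = 0$, so $M_1$ is generalized Eulerian. For $M_3$: given homogeneous $m_3 \in M_3$ with $|m_3| = j$, lift it to a homogeneous $m_2 \in M_2$ of degree $j$; a power of $\E_n - j$ annihilates $m_2$, and applying $\alpha_2$ shows the same power annihilates $m_3$. Hence $M_3$ is generalized Eulerian.

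For $(2) \Rightarrow (1)$, take a homogeneous $m \in M_2$ with $|m| = j$. Since $\alpha_2(m) \in M_3$ is homogeneous of degree $j$ and $M_3$ is generalized Eulerian, there is $b$ with $(\E_n - j)^b \alpha_2(m) = 0$, i.e. $(\E_n - j)^b m \in \ker \alpha_2 = \image \alpha_1$. Because $\E_n - j$ has degree zero, the element $(\E_n - j)^b m$ is homogeneous of degree $j$, so we may write it as $\alpha_1(m_1)$ with $m_1 \in M_1$ homogeneous of degree $j$. Since $M_1$ is generalized Eulerian, $(\E_n - j)^c m_1 = 0$ for some $c$, and applying $\alpha_1$ gives $(\E_n - j)^{b+c} m = 0$. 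Thus $M_2$ is generalized Eulerian.

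The one point that genuinely matters — and the reason the statement holds, in contrast to the failure of the plain Eulerian property under extensions — is that $\E_n - j$ preserves degrees, so the "error term" $(\E_n - j)^b m$ stays inside the degree-$j$ component and can be pulled back to a homogeneous element of $M_1$; one then simply adds the two nilpotency exponents. I anticipate no real obstacle beyond keeping track of homogeneity and of the fact that $\alpha_1, \alpha_2$ commute with $\E_n$.
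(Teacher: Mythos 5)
Your proof is correct and follows essentially the same argument as the paper: for $(2)\Rightarrow(1)$ you use $(\E_n-|m|)^b m \in \ker\alpha_2 = \image\alpha_1$, pull back to a homogeneous element of $M_1$, and add the two exponents, exactly as in the paper's proof (which dismisses $(1)\Rightarrow(2)$ as clear, just as your verification of that direction shows).
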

\begin{proof}
The assertion $(1) \implies (2)$ is clear.
We prove $(2) \implies (1)$. Let $m \in  M_2$ be homogenous. As $M_3$ is  generalized Eulerian we have
\[
(\E_n - |m|)^b\alpha_2(m) = 0 \quad \text{for some} \ b \geq 1.
\]
 Set $v_2 = (\E_n - |m|)^bm \in M_2$.  As $\alpha_2$ is $A_n(K)$-linear we get $\alpha_2(v_2) = 0.$ So $v_2 = \alpha_1(v_1)$ for some $v_1 \in M_1$. Note $\deg v_1 = \deg v_2 = |m|$. As $M_1$ is  generalized Eulerian
 we have that  
 \[
(\E_n - |m|)^a v_1 = 0 \quad \text{for some} \ a \geq 1.
\]
As $\alpha_1$ is $A_n(K)$-linear we get $(\E_n - |m|)^a v_2 = 0$. It follows that
\[
(\E_n - |m|)^{a+b}m = 0.
\]
\end{proof}
If $M$ is a graded $A_n(K)$-module then for $l \in \Z$ the module $M(l)$ denotes the shift of $M$ by $l$, i.e., $M(l)_n = M_{n+l}$ for all $n \in \Z$.
The following result was proved for Eulerian $A_n(K)$-modules in \cite[2.4]{MZ}.
\begin{proposition}
Let $M$ be a non-zero generalized Eulerian $A_n(K)$-module. Then  for $l \neq 0$ the module $M(l)$ is NOT a generalized Eulerian $A_n(K)$-module.
\end{proposition}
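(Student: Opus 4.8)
The plan is to argue by contradiction: I would suppose that $M(l)$ is generalized Eulerian for some integer $l \neq 0$ and derive a contradiction. First, since $M$ is non-zero and graded I can pick a non-zero homogeneous element $m \in M$; write $d = |m|$ for its degree as an element of $M$. The key observation is purely a matter of bookkeeping: viewed inside $M(l)$, the same element $m$ is homogeneous of degree $d - l$, since $M(l)_j = M_{j+l}$.

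Next I would feed this into the two generalized Eulerian hypotheses. From $M$ being generalized Eulerian I get an integer $b \geq 1$ with $(\E_n - d)^b m = 0$. From $M(l)$ being generalized Eulerian, and using that $m$ has degree $d - l$ there, I get an integer $a \geq 1$ with $(\E_n - (d - l))^a m = 0$. Here it is important that $\E_n$ is literally the same operator on $M$ and on $M(l)$ (twisting alters the grading, not the $A_n(K)$-module structure) and that $\deg \E_n = 0$, so both relations take place inside the one homogeneous component of $M$ that contains $m$.

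Finally, since $l \neq 0$ the scalars $d$ and $d - l$ are distinct, so the polynomials $(t - d)^b$ and $(t - (d-l))^a$ are coprime in $K[t]$; choosing $p(t), q(t) \in K[t]$ with $p(t)(t-d)^b + q(t)(t - (d-l))^a = 1$ and substituting $t = \E_n$ gives
\[
m = p(\E_n)(\E_n - d)^b m + q(\E_n)(\E_n - (d-l))^a m = 0,
\]
contradicting $m \neq 0$. I do not expect a real obstacle here; the only step that needs genuine care is the degree computation in the first paragraph — making sure the Euler relation forced on $M(l)$ is centered at $d - l$ and not at $d$ — after which the conclusion is the familiar fact that a vector annihilated by two coprime polynomials in a single commuting operator must vanish.
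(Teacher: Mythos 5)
Your proof is correct, and it starts exactly as the paper does: the same choice of a non-zero homogeneous $m$, the same bookkeeping that $m$ sits in degree $d-l$ of $M(l)$ while $\E_n$ is literally the same operator on $M$ and on $M(l)$, and the same pair of relations $(\E_n-d)^b m=0$ and $(\E_n-(d-l))^a m=0$. Where you differ is the concluding step. The paper normalizes $b$ to be minimal, expands $(\E_n-d+l)^a = l^a + \sum_{i\geq 1}\binom{a}{i}l^{a-i}(\E_n-d)^i$, and multiplies by $(\E_n-d)^{b-1}$ to obtain $l^a(\E_n-d)^{b-1}m=0$, contradicting minimality; you instead use that $(t-d)^b$ and $(t-(d-l))^a$ are coprime in $K[t]$, evaluate a B\'ezout identity at $\E_n$ (legitimate, since $t\mapsto\E_n$ is a ring map into the commutative subring $K[\E_n]$), and conclude $m=0$ outright. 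Your version is marginally cleaner in that it needs no minimality normalization and gives the stronger conclusion directly, while the paper's is a bare-hands binomial computation; both hinge on the same two points, namely that twisting changes only the grading and that the nonzero integer $l$ remains nonzero in $K$. That last point is where characteristic zero enters your coprimality claim (``$d$ and $d-l$ are distinct scalars''), so it deserves an explicit mention, exactly as $l^a\neq 0$ does in the paper's version; with that noted, there is no gap.
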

\begin{proof}
Suppose $M(l)$ is  a generalized Eulerian $A_n(K)$-module for some $l  \neq 0$.
Let $m \in M$ be homogenous of degree $r$ and \textit{non-zero}. As $M$ is generalized Eulerain $A_n(K)$-module we have that
\[
(\E_n - r)^am = 0 \quad \text{for some} \ a \geq 1.
\]
We may assume $(\E_n - r)^{a-1}m \neq 0$. Now $m \in M(l)_{r-l}$. As $M(l)$ is generalized Eulerain  we get that
\[
(\E_n - r + l)^bm = 0 \quad \text{for some} \ b \geq 1.
\]
Notice
\[
0 = (\E_n - r + l)^bm  = \left( l^b + \sum_{i = 1}^{b}\binom{b}{i} l^{b-i}(\E_n -r)^{i}    \right)m.
\]
Multiply on the left by $(\E_n - r)^{a-1}$. We obtain
\[
l^b(\E_n - r)^{a-1}m = 0
\]
As $l \neq 0$ we get $(\E_n - r)^{a-1}m = 0$ a contradiction.
\end{proof}

\section{Proof of Theorem \ref{qE}}
In this section we prove Theorem \ref{qE}. Notice that $H^i_I(R)$ are Eulerian $A_n(K)$-module for all $i \geq 0$. Hence Theorem \ref{qE} follows from the following more general result.
\begin{theorem}\label{gEv}
Let $M$ be a generalized Eulerian $A_n(K)$-module. Then $H_i(\partial; M)$ is concentrated in degree
$-\omega = -\sum_{k =1}^{n} \omega_k$.
\end{theorem}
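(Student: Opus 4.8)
The plan is to induct on $n$, using Lemma \ref{Koszul} to strip off one variable at a time, so the whole problem reduces to understanding $H_0(\partial_n; M)$ and $H_1(\partial_n; M)$ for a generalized Eulerian $A_n(K)$-module $M$. The base case $n = 1$ is a direct computation: here $\E_1 = \omega_1 X_1\partial_1$, and for a homogeneous $m$ of degree $d$ we have $\E_1 = \omega_1 X_1\partial_1$ acting with the relation $\partial_1 X_1 = X_1\partial_1 + 1$. One checks that $H_0(\partial_1; M) = M/\partial_1 M$ and $H_1(\partial_1; M) = (0 :_M \partial_1)$, and the generalized Eulerian condition forces both to live only in degree $-\omega_1$. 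Concretely, if $\partial_1 m = 0$ with $|m| = d$, then $\E_1 m = \omega_1 X_1\partial_1 m = 0$, so $(\E_n - d)^a m = 0$ together with $\E_1 m = 0$ gives $(-d)^a m = 0$, hence $d = 0$ unless $m = 0$; but $H_1(\partial_1; M) = \ker \partial_1$ sits in $M$ with a degree shift coming from $\deg\partial_1 = -\omega_1$, which pins the surviving degree to $-\omega_1$. A parallel computation (using $\partial_1 X_1 = X_1\partial_1 + 1$ to relate $\E_1$ on $M/\partial_1 M$) handles $H_0$.

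For the inductive step, write $\bP = \partial_1,\ldots,\partial_n$ and $\bP' = \partial_2,\ldots,\partial_n$ (reindexing Lemma \ref{Koszul} with $r = 1$). The key structural point I need is: \emph{if $M$ is a generalized Eulerian $A_n(K)$-module, then $H_0(\partial_n; M)$ and $H_1(\partial_n; M)$, viewed as $A_{n-1}(K)$-modules, are generalized Eulerian} (up to the appropriate degree bookkeeping). This is the analogue, for the generalized notion, of the stability properties Ma–Zhang establish for genuine Eulerian modules; the point of passing to the generalized class in Section 2 — in particular Proposition \ref{exact} — is exactly that it is closed under the operations that arise here. Granting this, one peels off $\partial_1$: by Lemma \ref{Koszul} there is a short exact sequence
\[
0 \rt H_0(\partial_1; H_i(\bP'; M)) \rt H_i(\bP; M) \rt H_1(\partial_1; H_{i-1}(\bP'; M)) \rt 0,
\]
and by induction each $H_j(\bP'; M)$ is concentrated in degree $-(\omega_2 + \cdots + \omega_n)$ as an $A_{n-1}(K)$-module (after checking it is generalized Eulerian), so the base case applied to $\partial_1$ shows both outer terms, hence $H_i(\bP; M)$, are concentrated in degree $-\omega_1 - (\omega_2 + \cdots + \omega_n) = -\omega$.

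The main obstacle is the structural claim in the inductive step: verifying that $H_0(\partial_n; M)$ and $H_1(\partial_n; M)$ inherit the generalized Eulerian property as $A_{n-1}(K)$-modules, and tracking the correct degree so that the Eulerian operator $\E_{n-1}$ for $A_{n-1}(K)$ acts with the right eigenvalue on the quotient/submodule (the bookkeeping here is where the shift by $\deg\partial_n = -\omega_n$ enters). One has to compute how $\E_n$ and $\E_{n-1}$ differ — $\E_n = \E_{n-1} + \omega_n X_n\partial_n$ — and use $\partial_n X_n = X_n\partial_n + 1$ to see that on $\ker\partial_n$ and on $M/\partial_n M$ the operator $\E_n$ agrees with $\E_{n-1}$ up to a constant shift, so that a power of $(\E_n - |m|)$ annihilating a lift translates into a power of $(\E_{n-1} - |m|')$ annihilating the class, with $|m|'$ the degree in the shifted module. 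Once that is in place, everything else is a routine assembly of the long exact sequence / Lemma \ref{Koszul} with the $n=1$ computation. I would isolate the structural claim as a separate lemma before starting the induction.
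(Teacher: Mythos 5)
Your skeleton — strip one $\partial$ at a time via Lemma \ref{Koszul}, settle $n=1$ by the explicit computation with $\omega_1X_1\partial_1$, and isolate as a lemma that $H_0(\partial_n;M)$ and $H_1(\partial_n;M)$ are generalized Eulerian $A_{n-1}(K)$-modules after the shift by $-\omega_n$ — is exactly the paper's Propositions \ref{n1} and \ref{base}. But the inductive step as you state it has a genuine gap. You claim that ``by induction each $H_j(\partial';M)$ is concentrated in degree $-(\omega_2+\cdots+\omega_n)$'', where $\partial'=\partial_2,\ldots,\partial_n$. The induction hypothesis cannot be applied to this object: it concerns modules that are generalized Eulerian over $A_{n-1}(K)$, and $M$ restricted to the subalgebra $K\langle X_2,\ldots,X_n,\partial_2,\ldots,\partial_n\rangle$ is not generalized Eulerian in general, since the relevant Euler operator there is $\E_n-\omega_1X_1\partial_1$, not $\E_n$. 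Moreover the claimed conclusion is simply false: the intermediate homologies need not sit in a single degree. For example, with $n=2$, standard grading and $M=H^1_{(X_2)}(R)=R_{X_2}/R$ (which is Eulerian), one has $H_0(\partial_2;M)\cong K[X_1]\cdot X_2^{-1}$, which is nonzero in every degree $\geq -1$. And even granting your claim the bookkeeping would not close: if $H_j(\partial';M)$ sat in the single degree $d=-(\omega_2+\cdots+\omega_n)$, then $H_0(\partial_1;-)$ of it would sit in degree $d$ while $H_1(\partial_1;-)$ would sit in degree $d-\omega_1$, so the two outer terms of the sequence in Lemma \ref{Koszul} would occupy different degrees and concentration of the middle term in $-\omega$ would not follow.

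The invariant that actually propagates through the induction is not ``concentrated in one degree'' but ``generalized Eulerian after shifting by minus the sum of the weights already used'': this is Proposition \ref{base-2}, proved by iterating Proposition \ref{base} one variable at a time, and it is precisely here that closure under extensions (Proposition \ref{exact}) is needed, because the sequence of Lemma \ref{Koszul} only exhibits the iterated homology as an extension of two generalized Eulerian modules. The degree concentration appears only once, at the very end, when the last remaining $\partial_1$ is handled by the $n=1$ case (Proposition \ref{n1}), which says that both $H_0(\partial_1;N)$ and $H_1(\partial_1;N)$ of a generalized Eulerian $A_1(K)$-module $N$ are concentrated in degree $-\omega_1$ — a statement that genuinely uses the Eulerian structure rather than degree bookkeeping. (The slip of stating the structural lemma for $\partial_n$ while stripping $\partial_1$ in the sequence is harmless reindexing.) With your inductive claim replaced by the statement of Proposition \ref{base-2}, your argument becomes the paper's proof.
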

Before proving \ref{gEv} we need to prove a few preliminary results.
\begin{proposition}\label{base}
Let $M$ be a generalized Eulerian $A_n(K)$-module. Then for $i = 0,1$, the $A_{n-1}(K)$-modules $H_i(\partial_n ; M)(-\omega_n)$ are 
generalized Eulerian.
\end{proposition}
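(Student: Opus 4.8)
The plan is to work directly with the two Koszul homology modules. Normalizing the Koszul complex $K(\partial_n;M)$ so that $\partial_n$ becomes a degree-zero map, i.e. $K_1 = M(\omega_n)$ and $K_0 = M$, we have $H_1(\partial_n;M) = \ker(\partial_n)$ with the grading induced from $M(\omega_n)$ and $H_0(\partial_n;M) = M/\partial_n M$ with the quotient grading from $M$. The engine of the proof is the identity
\[
\E_n = \E_{n-1} + \omega_n X_n\partial_n, \qquad \E_{n-1} := \sum_{i=1}^{n-1}\omega_i X_i\partial_i,
\]
together with the Weyl relation $\partial_n X_n - X_n\partial_n = 1$; note $\E_{n-1}$ commutes with $\partial_n$, hence acts on both $\ker\partial_n$ and $M/\partial_n M$. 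So powers of $\E_n$ (which annihilate homogeneous elements of $M$ by hypothesis) must be compared with powers of $\E_{n-1}$ on these two modules.

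For $H_1$ I would take a homogeneous $m\in M$ with $\partial_n m = 0$; then $\omega_n X_n\partial_n m = 0$, so $\E_n m = \E_{n-1}m$, and since $\E_{n-1}$ preserves $\ker\partial_n$ the operators $\E_n$ and $\E_{n-1}$ agree on $\ker\partial_n$. Hence if $\deg_M m = d$, the hypothesis $(\E_n - d)^a m = 0$ gives $(\E_{n-1} - d)^a m = 0$. Now $m$ sits in degree $d - \omega_n$ in $H_1(\partial_n;M)$, hence in degree $d$ in $H_1(\partial_n;M)(-\omega_n)$, so this is exactly the generalized Eulerian condition for $H_1(\partial_n;M)(-\omega_n)$.

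For $H_0$ I would rewrite $\omega_n X_n\partial_n = \omega_n\partial_n X_n - \omega_n$ using the Weyl relation; since $\partial_n X_n w\in\partial_n M$ for every $w$, the operator $\omega_n X_n\partial_n$ is multiplication by $-\omega_n$ on $M/\partial_n M$, whence $\E_n = \E_{n-1} - \omega_n$ there. For homogeneous $m\in M_d$ the hypothesis $(\E_n - d)^a\bar m = 0$ then reads $(\E_{n-1} - (d+\omega_n))^a\bar m = 0$; and $\bar m$ has degree $d$ in $H_0(\partial_n;M)$, hence degree $d+\omega_n$ in $H_0(\partial_n;M)(-\omega_n)$, precisely the degree needed.

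Both verifications are short, so the only delicate point — the one I would check most carefully — is the grading bookkeeping: that a single twist by $-\omega_n$ simultaneously absorbs the Koszul-complex shift that appears on $H_1$ and the additive constant $-\omega_n$ (coming from the commutator) relating $\E_n$ to $\E_{n-1}$ on $H_0$. Once these two gradings are aligned no genuine obstacle remains, and the same comparison is exactly what will feed an induction on $n$ (via Lemma \ref{Koszul}) in the proof of Theorem \ref{gEv}.
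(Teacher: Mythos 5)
Your argument is correct and is essentially the paper's own proof: both rest on the decomposition $\E_n = \E_{n-1} + \omega_n X_n\partial_n$, the Weyl relation $\partial_n X_n = X_n\partial_n + 1$ to handle $H_0 = M/\partial_n M$, the fact that $\E_{n-1}$ commutes with $\partial_n$, and the same bookkeeping showing the single twist by $-\omega_n$ restores the generalized Eulerian condition in both homological degrees. Your phrasing (the operators $\E_n$ and $\E_{n-1}$, resp.\ $\E_{n-1}-\omega_n$, agree on $\ker\partial_n$, resp.\ on $M/\partial_n M$) just repackages the paper's binomial-expansion step, so no substantive difference.
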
 
\begin{proof}
Clearly $H_i(\partial_n ; M)(-\omega_n)$ are 
 $A_{n-1}(K)$-modules for $i = 0,1$.
We have an exact sequence of $A_{n-1}(K)$-modules
\[
0 \rt H_1(\partial_n;M) \rt M(\omega_n) \xrightarrow{\partial_n} M \rt H_0(\partial_n; M) \rt 0.
\]
Note that $H_1(\partial_n ; M)(-\omega_n) \subset M$. Let $\xi \in H_1(\partial_n ; M)(-\omega_n)$ be homogeneous. As 
$M$ is generalized Eulerian we have that
\[
(\E_n - |\xi|)^a\xi = 0 \quad \text{for some} \ a \geq 1.
\]
Notice $\E_n = \E_{n-1} + \omega_nX_n\partial_n$. Also note that $X_n\partial_n$ commutes with $\E_{n-1}$. So
\[
0 = (\E_{n-1} - |\xi| + \omega_nX_n\partial_n)^a\xi  = \left( (\E_{n-1} - |\xi|)^a + (*) X_n\partial_n\right)\xi.
\]
As $\partial_n \xi = 0$ we get that $(\E_{n-1} - |\xi|)^a\xi = 0$.
It follows that $H_1(\partial_n ; M)(-\omega_n)$ is a 
generalized Eulerian $A_{n-1}(K)$-module.

Let $\xi \in H_0(\partial_n ; M)(-\omega_n)$ be homogenous of degree $r$. Then
$\xi = \alpha + \partial_n M$ where $\alpha \in M_{r-\omega_n}$. As $M$ is generalized Eulerian we get that 
\[
(\E_n - r + \omega_n)^a\alpha = 0 \quad \text{for some} \ a \geq 1.
\]
Notice $\E_n = \E_{n-1} + \omega_n X_n \partial_n = \E_{n-1} + \omega_n \partial_nX_n - \omega_n$. So
$\E_n - r + \omega_n = \E_{n-1} - r + \omega_n\partial_n X_n$. Notice that $\partial_nX_n$ commutes with $\E_{n-1}$.
Thus
\[
0 = (\E_{n-1} - r + \omega_n\partial_n X_n)^a \alpha = (\E_{n-1} - r)^a \alpha + \partial_n\cdot* \alpha.
\]
Going mod $\partial_n M$ we get
\[
(\E_{n-1} - r)^a\xi = 0. 
\]
It follows that $H_0(\partial_n ; M)(-\omega_n)$ is a 
generalized Eulerian $A_{n-1}(K)$-module.
\end{proof}
\begin{remark}
If $M$ is  Eulerian then the same proof shows that $H_i(\partial_n ; M)(-\omega_n)$ is a 
 Eulerian $A_{n-1}(K)$-module for $i = 0,1$. However as the proof of the following theorem shows that
we can only prove that $H_1(\partial_{n-1},\partial_n ; M)(-\omega_{n-1} - \omega_n)$
is  a generalized Eulerian $A_{n-1}(K)$-module.
\end{remark}

We now show that
\begin{proposition}\label{base-2}
Let $M$ be a generalized Eulerian $A_n(K)$-module. 
Let \\ $\partial = \partial_i,\partial_{i+1},\ldots,\partial_n$; here $i \geq 2$. Then for each $j \geq 0$ the De Rahm homology module
\[
H_j(\partial;M)(-\sum_{k=i}^{n}\omega_k)
\]
is a 
generalized Eulerian $A_{i-1}(K)$-module.
\end{proposition}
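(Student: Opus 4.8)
The plan is to argue by descending induction on $i$, peeling off one derivation $\partial_i$ at a time. The base case $i = n$ is immediate: there $\partial = \partial_n$, so $H_j(\partial_n; M) = 0$ for $j \geq 2$, while for $j = 0, 1$ the assertion that $H_j(\partial_n; M)(-\omega_n)$ is a generalized Eulerian $A_{n-1}(K)$-module is precisely Proposition \ref{base}.

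For the inductive step, suppose the result holds for the shorter string $\partial' = \partial_{i+1}, \ldots, \partial_n$ (this is legitimate since $i+1 \geq 2$). Fix $j \geq 0$ and abbreviate $\omega' = \sum_{k=i+1}^{n}\omega_k$ and $\omega'' = \omega_i + \omega' = \sum_{k=i}^{n}\omega_k$. By the inductive hypothesis, $H_j(\partial'; M)(-\omega')$ and $H_{j-1}(\partial'; M)(-\omega')$ are generalized Eulerian $A_i(K)$-modules (the second being the zero module when $j = 0$). Applying Proposition \ref{base} to the Weyl algebra $A_i(K)$ and the single derivation $\partial_i$, together with the elementary fact that shifting commutes with Koszul homology, i.e. $H_t(\partial_i; N(l)) = H_t(\partial_i; N)(l)$ for any index $t$ since the entire complex $0 \to N(\omega_i) \xrightarrow{\partial_i} N \to 0$ is shifted, we deduce that $H_0(\partial_i; H_j(\partial'; M))(-\omega'')$ and $H_1(\partial_i; H_{j-1}(\partial'; M))(-\omega'')$ are generalized Eulerian $A_{i-1}(K)$-modules. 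Now Lemma \ref{Koszul}, applied to the string $\partial = \partial_i, \partial_{i+1}, \ldots, \partial_n$ with sub-string $\partial'$, furnishes a short exact sequence of $A_{i-1}(K)$-modules
$$0 \to H_0(\partial_i; H_j(\partial'; M)) \to H_j(\partial; M) \to H_1(\partial_i; H_{j-1}(\partial'; M)) \to 0 .$$
Twisting this sequence by $-\omega''$ and invoking Proposition \ref{exact} then shows that $H_j(\partial; M)(-\omega'')$ is a generalized Eulerian $A_{i-1}(K)$-module, closing the induction.

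Once Proposition \ref{base} is available, the argument is essentially bookkeeping, and the only point that needs attention is the grading twists. Because $\deg \partial_k = -\omega_k$, each passage through $H_1(\partial_k; -)$ lands naturally inside a shift, and these shifts accumulate to exactly $-\sum_{k=i}^n \omega_k$ rather than $0$; one must check they are compatible with the maps in Lemma \ref{Koszul}, which they are, since that sequence is assembled from the Koszul complexes of $\partial_i$ and of $\partial'$, each twisted in the standard way. One should also observe at the outset that $H_j(\partial; M)$ is genuinely an $A_{i-1}(K)$-module, which follows by iterating the fact recalled in \ref{mod-1} that $H_t(\partial_n; -)$ carries $A_n(K)$-modules to $A_{n-1}(K)$-modules. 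I expect this grading-compatibility check to be the only mildly delicate part; everything else is a formal consequence of Propositions \ref{exact} and \ref{base} and Lemma \ref{Koszul}.
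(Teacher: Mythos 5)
Your argument is correct and coincides with the paper's own proof: the same descending induction on $i$, using Proposition \ref{base} for the base case and for the single derivation $\partial_i$, Lemma \ref{Koszul} for the short exact sequence, and Proposition \ref{exact} to conclude after twisting by $-\sum_{k=i}^{n}\omega_k$. Your explicit remarks that shifting commutes with Koszul homology and that $H_j(\partial;M)$ is an $A_{i-1}(K)$-module are exactly the points the paper leaves implicit, so there is no substantive difference.
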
 
\begin{proof}
We prove this result by descending induction on $i$.
For $i = n$ the result holds by Proposition \ref{base}. Set $\partial^\prime = \partial_{i+1},\ldots,\partial_n$.
By induction hypothesis $H_j( \partial^\prime;M)(-\sum_{k= i+1}^{n}\omega_k)$ is generalized Eulerian $A_i(K)$-module.
By Proposition \ref{base} again we get that for $l = 0,1$ and for each $j \geq 0$
\[
H_l\left(\partial_i; H_j(\partial^\prime;M)(-\sum_{k= i+1}^{n}\omega_k)\right)(-\omega_i) = H_l\left(\partial_i; H_j(\partial^\prime;M)\right)(-\sum_{k= i}^{n}\omega_k)
\]
is generalized Eulerian. By \ref{Koszul} we have exact sequence
\[
0 \rt H_0(\partial_i; H_j(\partial^\prime;M)) \rt H_j(\partial;M) \rt H_1(\partial_i; H_{j-1}(\partial^\prime;M)) \rt 0.
\]
The modules at the left and right end become generalized Eulerian after shifting by $ -\sum_{k= i}^{n}\omega_k$. By \ref{exact} it follows that
for each $j \geq 0$ the De Rahm homology module
\[
H_j(\partial;M)(-\sum_{k=i}^{n}\omega_k)
\]
is a 
generalized Eulerian $A_{i-1}(K)$-module.
\end{proof}
We now consider the  case when $n = 1$.
\begin{proposition}\label{n1}
Let $M$ be a generalized Eulerian $A_1(K)$-module. Then for $l = 0,1$ the modules $H_l(\partial_1;M)$ is concentrated in degree $-\omega_1$.
\end{proposition}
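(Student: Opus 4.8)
The plan is to unwind the two De Rham homology modules explicitly and to read off their degrees directly from the generalized Eulerian condition. Specializing the exact sequence used in the proof of Proposition \ref{base} to $n = 1$, we have
\[
0 \rt H_1(\partial_1; M) \rt M(\omega_1) \xrightarrow{\ \partial_1\ } M \rt H_0(\partial_1; M) \rt 0,
\]
so $H_1(\partial_1; M) = \ker(\partial_1 \colon M \rt M)$, viewed inside $M(\omega_1)$, and $H_0(\partial_1; M) = M/\partial_1 M$ with the grading inherited from $M$. The only algebraic inputs beyond this are the commutation rule $X_1 \partial_1 = \partial_1 X_1 - 1$ in $A_1(K)$, the centrality of the scalars, and the fact that $\operatorname{char} K = 0$, so that every nonzero integer is invertible; this invertibility is exactly what will force concentration in a single degree.

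First I would handle $H_1$. Let $m \in M$ be homogeneous with $\partial_1 m = 0$; it represents a class of degree $|m| - \omega_1$ in $H_1(\partial_1; M)$. Observe that $\E_1 m = \omega_1 X_1(\partial_1 m) = 0$, hence $\E_1^k m = 0$ for every $k \geq 1$. Expanding $(\E_1 - |m|)^a m = 0$ (valid for some $a \geq 1$ by the generalized Eulerian hypothesis), with the central scalar $-|m|$ pulled out of each factor, all terms carrying a positive power of $\E_1$ vanish, leaving $(-|m|)^a m = 0$. Since $K$ has characteristic zero, $m = 0$ unless $|m| = 0$, i.e. unless the class sits in degree $-\omega_1$. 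Thus $H_1(\partial_1; M)$ is concentrated in degree $-\omega_1$.

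Next I would handle $H_0 = M/\partial_1 M$. The key point is that $\E_1$ descends to a degree-zero $K$-linear operator on $M/\partial_1 M$: from $X_1 \partial_1^2 = \partial_1 X_1 \partial_1 - \partial_1 \in \partial_1 A_1(K)$ one gets $\E_1(\partial_1 M) \subseteq \partial_1 M$, and on the quotient $\E_1$ acts as the scalar $-\omega_1$, since for $m \in M$
\[
\E_1 m = \omega_1 X_1 \partial_1 m = \omega_1(\partial_1 X_1 - 1)m = \omega_1 \partial_1(X_1 m) - \omega_1 m \equiv -\omega_1 m \pmod{\partial_1 M}.
\]
Consequently, if $\bar m \in (M/\partial_1 M)_r$ is the image of a homogeneous $m \in M_r$, applying the quotient map to $(\E_1 - r)^a m = 0$ yields $(-\omega_1 - r)^a \bar m = 0$; characteristic zero forces $\bar m = 0$ whenever $r \neq -\omega_1$. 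Hence $H_0(\partial_1; M)$ is concentrated in degree $-\omega_1$ as well, which finishes the proof.

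I do not expect a genuine obstacle here: the whole argument is a short computation inside $A_1(K)$. The only points that need a little care are the grading/shift bookkeeping in the Koszul complex (so that "degree $-\omega_1$" is measured against the correct module) and the verification that $\E_1$ is well defined on $M/\partial_1 M$; the role of $\operatorname{char} K = 0$ is precisely to make the nonzero scalars $|m|$ (for $H_1$) and $r + \omega_1$ (for $H_0$) invertible.
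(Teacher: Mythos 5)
Your proof is correct and follows essentially the same route as the paper: for $H_1$ you use that $\E_1$ kills $\ker\partial_1$ so $(\E_1-|m|)^a m=0$ collapses to $(-|m|)^a m=0$, and for $H_0$ you use $\omega_1 X_1\partial_1=\omega_1\partial_1 X_1-\omega_1$ so that $\E_1$ acts as $-\omega_1$ modulo $\partial_1 M$, which is exactly the paper's expansion $(*)\partial_1+(-1)^a(r+\omega_1)^a$ phrased as an induced operator on the quotient. The grading bookkeeping (shift by $\omega_1$ for $H_1$, none for $H_0$) is also handled correctly, so there is nothing to fix.
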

\begin{proof}
We have an exact sequence of  $K$-vector spaces
\[
0 \rt H_1(\partial_1;M) \rt M(\omega_1) \xrightarrow{\partial_1} M \rt H_0(\partial_1; M) \rt 0.
\]
Let $\xi \in H_1(\partial_1;M)(-\omega_1)$ be homogenous and non-zero. As $\xi \in M$ we have that
\[
(\omega_1X_1\partial_1 - |\xi|)^a \xi = 0 \ \text{for some} \ a \geq 1.
\]
Notice $(\omega_1X_1\partial_1 - |\xi|)^a = (*)\partial_1 + (-1)^a|\xi|^a$.
As $\partial_1\xi = 0$ we get
$(-1)^a|\xi|^a \xi = 0$. As $\xi \neq 0$ we get that $|\xi| =0$. It follows that $H_1(\partial_1;M)$ is concentrated in degree $-\omega_1$.

Let $\xi \in H_0(\partial_1,M)$ be non-zero and homogeneous of degree $r$. Let $\xi = \alpha + \partial_1M$ where
$\alpha \in M_r$. As $M$ is generalized Eulerian we get that
\[
(\omega_1X_1\partial_1 - r)^a\alpha = 0 \ \text{for some} \ a \geq 1.
\]
Notice $\omega_1X_1\partial_1 = \omega_1\partial_1X_1 - \omega_1$.
So we have
\[
0 = ( \omega_1\partial_1X_1 -(r+\omega_1))^a\alpha = \left(\partial_1 * + (-1)^a(r+\omega_1)^a \right)\alpha.
\]
In $M/\partial_1M$ we have $(-1)^a(r+\omega_1)^a\xi = 0$. As $\xi \neq 0$ we get that $r = -\omega_1$.
It follows that $H_0(\partial_1;M)$ is concentrated in degree $-\omega_1$.
\end{proof}
We now give
\begin{proof}[Proof of Theorem \ref{gEv}]
Set $\partial^\prime = \partial_2,\ldots,\partial_n$. By \ref{base-2} $N_j = H_j(\partial^\prime; M)(-\sum_{k =2}^{n} \omega_k)$ is generalized Eulerian $A_1(K)$-module, for each $j \geq 0$. We use exact sequence in \ref{Koszul} and shift it by $-\sum_{k =2}^{n} \omega_k$ to obtain an exact sequence
\[
0 \rt H_0(\partial_1,N_j) \rt H_j(\partial; M)(-\sum_{k =2}^{n} \omega_k) \rt H_1(\partial_1,N_{j-1}) \rt 0,
\]
for each $j \geq 0$. By Proposition \ref{n1} the modules on the left and right of the above exact sequence is concentrated in degree $-\omega_1$. It follows that for each $j \geq 0$ the $K$-vector space $H_j(\partial; M)$ is concentrated in degree $-\omega = -\sum_{k =1}^{n} \omega_k$.
\end{proof}

\section{Outline of proof of Theorem 2} 
The proof of Theorem 2 is a bit long and has a lot of technical details.
For the convenience of the reader we give an outline of the proof.

\s By \cite[Lemma 2.7]{P} we have $H_1(\bP, R_f) \cong H_1(\bP, H^1_{(f)}(R))$. So it is sufficient to work with  $H_1(\bP, R_f)$ in order to prove Theorem 2. We consider elements of $R^m_f$ as column-vectors. For $x \in R^m_f$ we write it as
$x = (x_1,\ldots,x_m)^\prime$; here $\prime$ indicates transpose.

\s Let $\xi \in R_f^m \setminus R^m$.  The element $(a_1/f^i,a_2/f^i,\ldots,a_m/f^i)^\prime$, with $a_j \in R$ for all $j$, is said to be a \textit{normal form} of $\xi$ if
\begin{enumerate}
\item
$\xi = (a_1/f^i,a_2/f^i,\ldots,a_m/f^i)^\prime$.
\item
$f$ does not divide $a_j$ for some $j$.
\item
$i\geq 1$.
\end{enumerate}

It can be easily shown that normal form of $\xi$ exists and is unique; see Proposition \ref{normal}.
Let $(a_1/f^i,a_2/f^i,\ldots,a_m/f^i)^\prime$ be the normal form of $\xi$. Set $L(\xi) = i$. Notice $L(\xi) \geq 1$. 

\s \textit{Construction of a function} $\theta \colon Z_1(\bP,R_f)\setminus R^n \rt H_1(\partial(f);A)$.

Let $\xi \in  Z_1(\bP,R_f)\setminus R^n$. Let $(a_1/f^i,a_2/f^i,\ldots,a_n/f^i)^\prime$ be the normal form of $\xi$. So we have $\sum_{j=1}^n \partial/\partial X_j (a_j/f^i) = 0$. So we have
\[
\frac{1}{f^i}\left( \sum_{j=1}^{n}\frac{\partial a_j}{\partial X_j}   \right)  - \frac{i}{f^{i+1}}\left( \sum_{j=1}^{n}a_j\frac{\partial f}{\partial X_j}   \right) = 0.
\]
It follows that 
$$f  \ \text{ divides } \ \sum_{j=1}^{n}a_j\frac{\partial f}{\partial X_j}.$$
So $(\ov{a_1},\ldots,\ov{a_n})^\prime \in Z_1(\partial(f);A)$.
We set
$$ \theta(\xi) = [(\ov{a_1},\ldots,\ov{a_n})^\prime] \in H_1(\partial(f);A). $$

\begin{remark}
It can be shown that if $\xi \in Z_1(\bP,R_f)_{-\omega}$ is non-zero then $\xi \notin R^n$, see \ref{In-Rf}. If $L(\xi) = i$ then by \ref{comput-degrees} we have 
$$ \theta(\xi) \in H_1(\partial(f);A)_{(i+1)\deg f - \omega}.$$
\end{remark}
The next result uses the fact that $A$ is an isolated singularity.
\begin{proposition}\label{B}
If $\xi \in  B_1(\bP,R_f)_{-\omega}$ is non-zero then $ \theta(\xi) = 0$.
\end{proposition}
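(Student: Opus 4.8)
The plan is to prove the slightly more flexible assertion that $\theta(\xi)=0$ for every $\xi\in B_1(\bP,R_f)$ with $\xi\notin R^n$; since a non-zero element of $B_1(\bP,R_f)_{-\omega}$ lies outside $R^n$ by \ref{In-Rf}, this gives the proposition. As $\xi$ is a boundary, write $\xi=\kappa(\zeta)$ with $\zeta\in K_2(\bP,R_f)$, where $\kappa$ is the Koszul differential of $K(\bP;-)$, and choose $\zeta$ of least pole order $s$ (the smallest integer with $f^{s}\zeta\in K_2(\bP,R)$); then $s\ge 1$, for $s=0$ would force $\xi\in R^n$. Writing $\zeta=f^{-s}\sum_{j<k}c_{jk}\,e_{j}\wedge e_{k}$ with $c_{jk}\in R$ not all divisible by $f$, a direct differentiation yields
\[
\xi_\ell=\frac{A_\ell}{f^{\,s+1}},\qquad A_\ell=f\,(\kappa c)_\ell-s\,(\delta c)_\ell ,
\]
where $c=(c_{jk})$, $\kappa c$ is its Koszul boundary for $\bP$, and $\delta$ is the Koszul differential over $R$ for the sequence $\partial_1(f),\dots,\partial_n(f)$, whose reduction modulo $(f)$ is the differential $\bar\delta$ of the complex computing $H_*(\partial(f);A)$. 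Hence $\ov{A_\ell}=-s\,(\bar\delta\,\bar c)_\ell$ in $A$, where $\bar c=(\ov{c_{jk}})$.

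First I would settle the easy case: if $f\nmid A_{\ell_0}$ for some $\ell_0$, then $(A_1/f^{s+1},\dots,A_n/f^{s+1})'$ is the normal form of $\xi$, so $\theta(\xi)=[(\ov{A_1},\dots,\ov{A_n})']=-s\,[\bar\delta\,\bar c]=0$; no use is made of the singularity hypothesis. So assume from now on $f\mid A_\ell$ for all $\ell$, i.e.\ $\bar\delta\,\bar c=0$, so that $\bar c\in Z_2(\partial(f);A)$.

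This is where $A=R/(f)$ being an isolated singularity is used. By Euler's relation $(\deg f)\,f=\sum_k\omega_kX_k\,\partial_k(f)$, we get $A/(\partial_1(f),\dots,\partial_n(f))A\cong R/(\partial_1(f),\dots,\partial_n(f))R$, the Milnor algebra of $f$, which is Artinian because $A$ has an isolated singularity; hence $(\partial_1(f),\dots,\partial_n(f))A$ is $\m A$-primary (or the unit ideal). As $A$ is a hypersurface it is \CM\ of dimension $n-1$, so this ideal has grade at least $n-1$ and therefore $H_i(\partial(f);A)=0$ for all $i\ge 2$; in particular $H_2(\partial(f);A)=0$. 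Thus $\bar c=\bar\delta(\bar e)$ for some $\bar e\in K_3(\partial(f);A)$; lifting $\bar e$ arbitrarily to $e\in K_3(\bP,R)$ we may write $c=\delta(e)+fg$ with $g\in K_2(\bP,R)$.

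With this I would finish as follows. If $s\ge 2$, the Leibniz rule gives $\kappa(f^{-(s-1)}e)=f^{-(s-1)}\kappa(e)-(s-1)f^{-s}\delta(e)$ (legitimate as $K$ has characteristic zero), hence
\[
\zeta=\tfrac{1}{s-1}\bigl(f^{-(s-1)}\kappa(e)-\kappa(f^{-(s-1)}e)\bigr)+f^{-(s-1)}g ,
\]
so that $\xi=\kappa(\zeta)=\kappa(\zeta')$ with $\zeta'=f^{-(s-1)}\bigl(\tfrac{1}{s-1}\kappa(e)+g\bigr)$ of pole order $\le s-1$, contradicting the minimality of $s$. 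Hence $s\ge 2$ can only occur in the easy case, where $\theta(\xi)=0$. There remains $s=1$: then $\zeta=f^{-1}\delta(e)+g$, and, using $\delta^2=0$,
\[
\xi=\kappa\bigl(f^{-1}\delta(e)\bigr)+\kappa(g)=f^{-1}\kappa\delta(e)+\kappa(g) ,
\]
so $\xi_\ell=f^{-1}(\kappa\delta(e))_\ell+(\kappa g)_\ell$; since $\xi\notin R^n$ the normal form of $\xi$ has $L(\xi)=1$ and $\theta(\xi)=[\,\ov{\kappa\delta(e)}\,]$. The crux is that $\kappa$ and $\delta$ anticommute: writing $\kappa=\sum_i\iota_i\partial_i$ and $\delta=\sum_i\partial_i(f)\,\iota_i$ on $\bigwedge^{\bullet}R^n$, where $\iota_i$ removes $e_i$ with the usual Koszul sign, one computes $\kappa\delta+\delta\kappa=\sum_{i,j}\partial_i\partial_j(f)\,\iota_i\iota_j=0$, since $\partial_i\partial_j(f)$ is symmetric in $i,j$ while $\iota_i\iota_j$ is alternating. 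Therefore $\ov{\kappa\delta(e)}=-\bar\delta\bigl(\ov{\kappa(e)}\bigr)\in B_1(\partial(f);A)$, and $\theta(\xi)=0$. I expect the $s=1$ case to be the main obstacle, since the pole-lowering trick degenerates there and one must instead extract $\theta(\xi)$ by hand and recognize it as a Koszul boundary via the anticommutativity; the sign conventions, the normal-form bookkeeping, and checking that the $s\ge 2$ step truly lowers the pole order are the points requiring care. Note that $\deg\xi=-\omega$ is used only to guarantee $\xi\notin R^n$.
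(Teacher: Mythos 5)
Your argument is correct, and its skeleton coincides with the paper's: pick a preimage of the boundary under the second Koszul/De Rham differential of minimal pole order, observe that in the ``easy'' case ($f$ does not divide all the numerators $A_\ell$, i.e.\ all the $v_l$ of the paper) the class $\theta(\xi)$ is visibly the reduction of a Koszul boundary, and in the remaining case use $H_2(\partial(f);A)=0$ (this is where the isolated singularity enters) to write $\bar c=\bar\delta(\bar e)$ and lower the pole order, contradicting minimality. Where you genuinely diverge is the base case of the pole-lowering, $s=1$ (the paper's $c=1$): the paper stays inside the fixed degree $-\omega$, computes $\deg\gamma_{ijl}=-\omega+\omega_i+\omega_j+\omega_l<0$ to force $\gamma=0$ when $n>3$, and has to dispose of $n=3$ by a separate hand computation, concluding that this case simply cannot occur for a non-zero boundary of degree $-\omega$; you instead show $\theta(\xi)=0$ directly via the anticommutation $\kappa\delta+\delta\kappa=\sum_{i,j}\partial_i\partial_j(f)\,\iota_i\iota_j=0$ (Hessian symmetry against alternating contractions), which exhibits $\ov{\kappa\delta(e)}=-\bar\delta(\ov{\kappa(e)})$ as a Koszul boundary. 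This buys you two things: the argument is uniform in $n$ (no special $n=3$ case) and completely degree-free, so you actually prove the stronger statement that $\theta$ kills every element of $B_1(\bP,R_f)\setminus R^n$, with the hypothesis $\deg\xi=-\omega$ used only to guarantee $\xi\notin R^n$ via \ref{In-Rf}; you also spell out the justification of $H_2(\partial(f);A)=0$ (Euler relation plus depth sensitivity of the Koszul complex for the \CM\ hypersurface $A$), which the paper uses without comment. What the paper's route buys in exchange is slightly more information in its setting (in degree $-\omega$ the hard case with $c=1$ is shown to be vacuous), but for the proposition itself your treatment is cleaner; the remaining differences (componentwise formulas versus the operator identity, the normalizing constant $c(c-1)$ versus $s-1$) are only bookkeeping.
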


\s Let $\xi \in  R_f^m$. We define $L(f)$ as follows.

\textit{Case 1:} $\xi \in R_f^m\setminus R^m$. 

Let $(a_1/f^i,a_2/f^i,\ldots,a_m/f^i)^\prime$ be the normal form of $\xi$. Set $L(\xi) = i$. Notice $L(\xi) \geq 1$ in this case.

\textit{Case 2:} $\xi \in R^m\setminus \{0\}$. 

Set $L(\xi) = 0$.

\textit{Case 3:} $\xi = 0$. 

Set $L(\xi) = -\infty$.

The following properties of the function $L$ can be easily verified.
\begin{proposition}\label{prop-l}
(with hypotheses as above) Let $\xi,\xi_1,\xi_2 \in R_f^m$ and $\alpha, \alpha_1,\alpha_2 \in K$.
\begin{enumerate}[\rm(1)]
\item
If $L(\xi_1) < L(\xi_2)$ then $L(\xi_1 + \xi_2) = L(\xi_2)$.
\item
 If $L(\xi_1) = L(\xi_2)$ then $L(\xi_1 + \xi_2) \leq L(\xi_2)$.
\item
$L(\xi_1 + \xi_2) \leq \max\{ L(\xi_1) , L(\xi_2) \}.$
\item
 If $\alpha \in K^*$ then $L(\alpha \xi ) = L(\xi)$.
\item
$L(\alpha \xi ) \leq L(\xi)$ for all $\alpha \in K$.
\item
$L(\alpha_1\xi_1 + \alpha_2\xi_2) \leq \max\{ L(\xi_1) , L(\xi_2) \}.$
\item
Let $\xi_1,\ldots,\xi_r \in R^m_f$ and let $\alpha_1,\ldots,\alpha_r \in K$. Then
$$L\left(\sum_{j=1}^{r}\alpha_j\xi_j \right) \leq \max \{ L(\xi_1), L(\xi_2),\ldots,L(\xi_r) \}.$$
\end{enumerate}
\end{proposition}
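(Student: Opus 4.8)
\emph{Proof proposal.} The plan is to reduce all seven statements to a single structural fact: $L$ is, up to truncation at $0$, the negative of a valuation-type function attached to $f$. Since $R = K[X_1,\ldots,X_n]$ is a UFD and $f$ is a nonzero non-unit, every nonzero element of $R_f$ is uniquely of the form $g/f^k$ with $g \in R$, $f \nmid g$ and $k \in \Z$; write $\nu(g/f^k) = k$ for this ``$f$-order'' and $\nu(0) = -\infty$. For $\xi = (\xi_1,\ldots,\xi_m)' \in R_f^m$ put $w(\xi) = \max_{1 \le j \le m} \nu(\xi_j)$, with $w(0) = -\infty$. Comparing with the normal form of Proposition \ref{normal} one checks immediately that $L(\xi) = \max\{0, w(\xi)\}$ for $\xi \neq 0$, and $L(0) = w(0) = -\infty$. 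Thus it suffices to prove the analogues of (1)--(7) for $w$, together with the trivial remark that $t \mapsto \max\{0,t\}$ is nondecreasing.

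The one genuine computation is the non-archimedean behaviour of $w$, carried out by clearing denominators. Given $\xi_1,\xi_2 \in R_f^m$, choose $N \gg 0$ and write $\xi_s = b^{(s)}/f^{N}$ with $b^{(s)} \in R^m$; then $w(\xi_s) = N - \min_j \operatorname{ord}_f\!\big(b^{(s)}_j\big)$ and $\xi_1 + \xi_2 = (b^{(1)} + b^{(2)})/f^N$. From the ultrametric inequality $\operatorname{ord}_f(a+b) \ge \min\{\operatorname{ord}_f a, \operatorname{ord}_f b\}$ in the UFD $R$, with equality when $\operatorname{ord}_f a \neq \operatorname{ord}_f b$, one gets $w(\xi_1 + \xi_2) \le \max\{w(\xi_1), w(\xi_2)\}$, with equality whenever $w(\xi_1) \neq w(\xi_2)$. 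For scalars: if $\alpha = 0$ then $L(\alpha\xi) = -\infty$; if $\alpha \in K^*$ then $\alpha$ is a unit of $R$, so $\operatorname{ord}_f$ and hence $w$ are unchanged, giving $L(\alpha\xi) = L(\xi)$. This already yields (4) and (5). Also note that $\xi_1 + \xi_2 = 0$ forces $\xi_1 = -\xi_2$, hence $L(\xi_1) = L(\xi_2)$ by (4), which disposes of the degenerate cases below.

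It remains to assemble (1)--(7). For (1) we may assume $\xi_1, \xi_2 \neq 0$ and (by the last remark) $\xi_1 + \xi_2 \neq 0$; since $L(\xi_1) < L(\xi_2)$ we have $L(\xi_2) \ge 1$, so $L(\xi_2) = w(\xi_2)$, and $w(\xi_1) \le L(\xi_1) < w(\xi_2)$, whence $w(\xi_1 + \xi_2) = w(\xi_2) \ge 1$ and $L(\xi_1 + \xi_2) = w(\xi_2) = L(\xi_2)$. For (3), which contains (2) as a special case, we compute $L(\xi_1 + \xi_2) = \max\{0, w(\xi_1 + \xi_2)\} \le \max\{0, \max\{w(\xi_1), w(\xi_2)\}\} = \max\{L(\xi_1), L(\xi_2)\}$, the case $\xi_1 + \xi_2 = 0$ being trivial. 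Then (6) follows from (3) and (5), and (7) follows from (6) by a straightforward induction on $r$. The argument is entirely elementary; the only place demanding care is the interplay between the truncation $\max\{0,-\}$ and the strict inequality in (1) — one must first upgrade $L(\xi_1) < L(\xi_2)$ to the honest strict inequality $w(\xi_1) < w(\xi_2)$ before invoking the valuation property — together with consistent bookkeeping of the convention $L(0) = -\infty$.
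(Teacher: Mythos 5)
Your proposal is correct. The paper itself offers no argument for this proposition (it is dismissed with ``can be easily verified''), so the natural comparison is with the direct case-by-case check via normal forms that the author presumably has in mind: for each of (1)--(7) one would bring $\xi_1,\xi_2$ to normal forms $(a_j/f^{i})'$, $(b_j/f^{r})'$, clear denominators to the larger exponent, and argue about divisibility by $f$. What you do instead is identify $L$ structurally, namely $L(\xi)=\max\{0,w(\xi)\}$ where $w(\xi)=\max_j\nu(\xi_j)$ and $\nu$ is the $f$-order on $R_f$, and then deduce everything from the single inequality $\operatorname{ord}_f(a+b)\ge\min\{\operatorname{ord}_f a,\operatorname{ord}_f b\}$ (with equality when the orders differ). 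This buys uniformity: (2), (3), (5), (6), (7) become formal consequences, and the only statement needing care, (1), is handled correctly --- you rightly upgrade $L(\xi_1)<L(\xi_2)$ to $w(\xi_1)<w(\xi_2)$ before invoking the equality case, and you dispose of the degenerate possibilities $\xi_1+\xi_2=0$ and $\xi_i=0$ via (4) and the convention $L(0)=-\infty$. Two small remarks, neither a gap: the passage from the componentwise equality case to $w(\xi_1+\xi_2)=w(\xi_2)$ deserves one explicit line (pick a component of $\xi_2$ attaining $w(\xi_2)$; there the order of the corresponding component of $\xi_1$ is strictly larger, so that component of the sum already has $\nu$ equal to $w(\xi_2)$); and note that your argument nowhere needs $f$ to be prime or $\operatorname{ord}_f$ to be an honest valuation --- only divisibility by powers of $f$ is used, so the proof is valid for an arbitrary nonzero non-unit homogeneous $f$, exactly the generality of the paper.
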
 

\s We now use the fact that $H_1(\bP,R_f)$ is concentrated in degree $-\omega = - \sum_{k=1}^{n}\omega_k$.
So 
\[
H_1(\bP,R_f) = H_1(\bP,R_f)_{-\omega} = \frac{Z_1(\bP,R_f))_{-\omega}}{B_1(\bP,R_f)_{-\omega}}.
\]
Let $x \in H_1(\bP,R_f)$ be non-zero. Define
\[
L(x) = \min \{ L(\xi) \mid x = [\xi], \text{where} \ \xi \in Z_1(\bP,R_f)_{-\omega}. \}.
\]
It can be shown that $L(x)\geq 1$.
If $x = 0$ then set
\[
L(0) = -\infty.
\]

 We now define a function
\begin{align*}
\widetilde{\theta} \colon H_1(\bP,R_f) &\rt H_1(\partial(f);A) \\
       x &\mapsto \begin{cases} \theta(\xi) & \text{if} \ x\neq 0, x = [\xi], \text{and} \ L(x) = L(\xi),\\ 0 & \text{if} \ x = 0. \end{cases}
\end{align*}
It can be shown that $\widetilde{\theta}(x)$ is independent of choice of $\xi$; see Proposition \ref{indep-1}. Also note that if $L(x) = i$ then $\widetilde{\theta}(x) \in H_1(\partial(f);A)_{(i+1)\deg f - \omega}$.

\s We now construct a filtration $\F = \{\F_\nu\}_{\nu\geq 0}$ of $H_1(\bP,R_f)$. Set
\[
\F_\nu = \{ x \in H_1(\bP,R_f)\mid L(x) \leq \nu \}.
\]
In the next section we prove
\begin{proposition}
\begin{enumerate}[\rm (1)]
\item
$\F_\nu$ is a $K$ subspace of $H_1(\bP,R_f)$.
\item
$\F_{\nu} \supseteq \F_{\nu-1}$ for all $\nu \geq 1$.
\item
$\F_\nu = H_1(\bP,R_f)$ for all $\nu \gg 0$.
\item
$\F_0 = 0$.
\end{enumerate}
\end{proposition}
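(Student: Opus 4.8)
The plan is to verify the four items directly from the properties of the length function $L$ collected in Proposition \ref{prop-l}, together with the fact already recorded that $L(x)\geq 1$ for every nonzero $x\in H_1(\bP,R_f)$ and the finite dimensionality of $H_1(\bP,R_f)$ as a $K$-vector space (which holds since $H^1_{(f)}(R)$ is holonomic and $H_1(\bP,R_f)\cong H_1(\bP,H^1_{(f)}(R))$). Note that, because $H_1(\bP,R_f)$ is concentrated in degree $-\omega$ by Theorem \ref{qE}, every nonzero class has at least one representing cycle in $Z_1(\bP,R_f)_{-\omega}$, and every such cycle is nonzero with $L$-value a non-negative integer; hence the minimum in the definition of $L(x)$ is taken over a nonempty set of non-negative integers and is attained.

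For (1), first $0\in\F_\nu$ since $L(0)=-\infty\leq\nu$. Given nonzero $x,y\in\F_\nu$ and $\alpha,\beta\in K$, choose cycles $\xi,\eta\in Z_1(\bP,R_f)_{-\omega}$ with $[\xi]=x$, $[\eta]=y$, $L(\xi)=L(x)\leq\nu$ and $L(\eta)=L(y)\leq\nu$. Since the Koszul differential is $K$-linear and the degree $-\omega$ strand is a $K$-subspace, $\alpha\xi+\beta\eta$ is again a cycle in $Z_1(\bP,R_f)_{-\omega}$ representing $\alpha x+\beta y$; hence by Proposition \ref{prop-l}(6), $L(\alpha x+\beta y)\leq L(\alpha\xi+\beta\eta)\leq\max\{L(\xi),L(\eta)\}\leq\nu$, so $\alpha x+\beta y\in\F_\nu$. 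Item (2) is immediate, since $L(x)\leq\nu-1$ forces $L(x)\leq\nu$.

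For (3), let $d=\dim_K H_1(\bP,R_f)<\infty$; if $d=0$ there is nothing to prove, so fix a $K$-basis $x_1,\ldots,x_d$ and, for each $i$, a cycle $\xi_i\in Z_1(\bP,R_f)_{-\omega}$ with $[\xi_i]=x_i$ and $L(\xi_i)=L(x_i)$. Put $N=\max_i L(x_i)$, a non-negative integer. An arbitrary $x\in H_1(\bP,R_f)$ equals $\sum_i\alpha_i x_i$, which is represented by the cycle $\sum_i\alpha_i\xi_i\in Z_1(\bP,R_f)_{-\omega}$, so Proposition \ref{prop-l}(7) gives $L(x)\leq L(\sum_i\alpha_i\xi_i)\leq\max_i L(\xi_i)=N$. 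Hence $\F_\nu=H_1(\bP,R_f)$ for all $\nu\geq N$. For (4), an element $x$ of $\F_0$ has $L(x)\leq 0$, which contradicts $L(x)\geq 1$ unless $x=0$; hence $\F_0=0$.

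I do not expect a genuine obstacle. The only point needing care — and the place where the degree concentration of Theorem \ref{qE} is really used — is that a $K$-linear combination of minimal-length cycle representatives is again a cycle lying entirely in degree $-\omega$, which is what legitimizes applying Proposition \ref{prop-l} to that combination and transferring the bound to $L$ of the homology class.
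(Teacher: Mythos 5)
Your proof is correct and follows essentially the same route as the paper: choose minimal-length cycle representatives in $Z_1(\bP,R_f)_{-\omega}$, apply the properties of $L$ from Proposition \ref{prop-l} to linear combinations, use a finite $K$-basis (finite dimensionality via holonomicity) for (3), and $L(x)\geq 1$ for nonzero classes for (4). The only cosmetic difference is that you handle scalars and sums in one step via Proposition \ref{prop-l}(6) where the paper treats them separately.
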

Let $\G = \bigoplus_{\nu \geq 1} \F_{\nu}/\F_{\nu-1}$. For $\nu \geq 1$ we define
\begin{align*}
\eta_{\nu} \colon \frac{\F_{\nu}}{\F_{\nu-1}} &\rt H_1(\partial(f);A)_{(\nu + 1)\deg f - \omega}\\
                 \xi &\mapsto \begin{cases}0, & \text{if} \ \xi = 0; \\ \widetilde{\theta}(x), & \text{if} \ \xi = x + \F_{\nu -1} \ \text{is non-zero}.                                                 \end{cases}
\end{align*}
It can be shown that $\eta_{\nu}(\xi)$ is independent of choice of $x$; see \ref{indep-2}.
Finally we prove the following result.

\begin{theorem}(with Notation as above).  For all $\nu \geq 1$,
\begin{enumerate}[\rm (1)]
\item
$\eta_{\nu}$ is $K$-linear.
\item
$\eta_{\nu}$ is injective.
\end{enumerate}

\end{theorem}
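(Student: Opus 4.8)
The plan is to reduce both claims to properties of the ``symbol map'' $\theta$ at the level of cycles and of its interaction with the pole-order function $L$. The key technical fact -- immediate from uniqueness of normal forms together with Propositions \ref{prop-l} and \ref{B} -- is an \emph{additivity lemma}: for nonzero $\xi,\xi^\prime\in Z_1(\bP,R_f)_{-\omega}$ with normal forms $\xi=(a_k/f^i)_k$, $\xi^\prime=(a^\prime_k/f^j)_k$, one has (i) if $i<j$, then $L(\xi+\xi^\prime)=j$ and $\theta(\xi+\xi^\prime)=\theta(\xi^\prime)$ (because $(f^{j-i}a_k+a^\prime_k)/f^j$ is already the normal form of $\xi+\xi^\prime$ and reduces mod $f$ to $(\ov{a^\prime_k})_k$); (ii) if $i=j=L(\xi+\xi^\prime)$, then $\theta(\xi+\xi^\prime)=\theta(\xi)+\theta(\xi^\prime)$; and (iii) if $i=j$ but $L(\xi+\xi^\prime)<i$, then $\theta(\xi)+\theta(\xi^\prime)=0$ (then $f\mid a_k+a^\prime_k$ for all $k$). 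Also $\theta(\lambda\xi)=\lambda\theta(\xi)$ for $\lambda\in K^\ast$.

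For $K$-linearity of $\eta_\nu$, compatibility with scalars is immediate from the last remark and $L(\lambda x)=L(x)$ for $\lambda\neq 0$. For additivity, argue by cases on which of the classes $\ov{x_1},\ov{x_2},\ov{x_1+x_2}$ in $\F_\nu/\F_{\nu-1}$ vanish, using minimal representatives $\xi_i$ of $x_i$ (so $L(\xi_i)=\nu$ when $\ov{x_i}\neq 0$). If some $\ov{x_i}=0$, the identity follows from case (i): adding an element of $\F_{\nu-1}$ to $x$ changes neither $L(x)$ (it stays $=\nu$, by Proposition \ref{prop-l}(1) and since $\F_{\nu-1}$ is a subspace) nor $\wt\theta(x)$. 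If $\ov{x_1},\ov{x_2},\ov{x_1+x_2}$ are all nonzero, then $\xi_1+\xi_2$ is a minimal representative of $x_1+x_2$ and case (ii) gives $\eta_\nu(\ov{x_1}+\ov{x_2})=\theta(\xi_1+\xi_2)=\theta(\xi_1)+\theta(\xi_2)=\eta_\nu(\ov{x_1})+\eta_\nu(\ov{x_2})$. If $\ov{x_1},\ov{x_2}\neq 0$ but $\ov{x_1+x_2}=0$: when $L(\xi_1+\xi_2)<\nu$ use case (iii); otherwise write $\xi_1+\xi_2=\eta+\beta$ with $\eta$ a minimal representative of $x_1+x_2$ and $\beta$ a boundary, necessarily nonzero with $L(\beta)=\nu$ by Proposition \ref{prop-l}(1), so $\theta(\beta)=0$ by Proposition \ref{B}; combining (i) and (ii) then gives $\theta(\xi_1)+\theta(\xi_2)=\theta(\xi_1+\xi_2)=\theta(\eta+\beta)=\theta(\beta)=0$.

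For injectivity it suffices, since $\eta_\nu$ is $K$-linear and $\F_0=0$, to prove the \emph{pole-reduction statement}: if $\xi\in Z_1(\bP,R_f)_{-\omega}$ has normal form $(a_k/f^i)_k$ with $i\ge 1$ and $\theta(\xi)=0$, then $[\xi]\in\F_{i-1}$. (Granting this, a nonzero $\ov x\in\F_\nu/\F_{\nu-1}$ has $L(x)=\nu$ and $\eta_\nu(\ov x)=\theta(\xi)$ for a minimal representative $\xi$; if this were $0$ the statement would force $L(x)\le\nu-1$, a contradiction.) Now $\theta(\xi)=0$ says the cycle $(\ov{a_k})_k$ is a boundary in the Koszul complex of $A$ \wrt\ $\partial_1(f),\dots,\partial_n(f)$; choosing a homogeneous bounding $2$-chain and lifting it to homogeneous polynomials $c_{kl},g_k$ gives $a_k=b_k+fg_k$ with $b_k=\sum_{l\neq k}\pm c_{kl}\,\partial_l(f)$, whence $\xi=(b_k/f^i)_k+(g_k/f^{i-1})_k$; moreover $\deg c_{kl}=(i-1)\deg f-\sum_{l^\prime\neq k,l}\omega_{l^\prime}$. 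If $i=1$ and $n\ge 3$ this degree is negative, so every $c_{kl}=0$, so $f\mid a_k$ for all $k$, contradicting the normal-form condition; hence no such $\xi$ exists. If $i\ge 2$, set $\zeta=\tfrac{-1}{i-1}(c_{kl}/f^{i-1})_{k<l}\in K_2(\bP,R_f)$; a degree check places $\zeta$ in degree $-\omega$, and from $\partial_l(c_{kl}/f^{i-1})=\partial_l(c_{kl})/f^{i-1}-(i-1)c_{kl}\,\partial_l(f)/f^{i}$ one computes that the Koszul boundary of $\zeta$ equals $(b_k/f^i)_k$ plus a tuple of pole order $\le i-1$. Subtracting this boundary from $\xi$ leaves a cycle homologous to $\xi$, in degree $-\omega$, of pole order $\le i-1$; thus $[\xi]\in\F_{i-1}$.

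The main obstacle is the pole-reduction statement. Its computational core is checking that the boundary built from the lifted Koszul relation cancels \emph{precisely} the top-pole part $(b_k/f^i)_k$, which forces one to keep straight the three gradings in play -- on $R_f$, on $K_\bullet(\bP,R_f)$, and on $K_\bullet(\partial(f);A)$ -- and where Euler's identity $\sum_k\omega_kX_k\partial_k(f)=(\deg f)f$ is what ultimately makes $\partial_1(f),\dots,\partial_n(f)$ a regular sequence on $R$, so that $A$ being an isolated singularity controls $K_\bullet(\partial(f);A)$. The base case $i=1$ is the other delicate point: it is exactly there that the isolated-singularity hypothesis, through the vanishing of the relevant graded piece of $K_2(\partial(f);A)$, is indispensable, and the small case $n=2$ needs separate care.
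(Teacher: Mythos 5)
Your proposal is correct and takes essentially the same route as the paper: $K$-linearity by a case analysis on normal forms using Propositions \ref{prop-l} and \ref{B}, and injectivity by the same integration-by-parts pole-reduction argument when $\nu \geq 2$ together with the degree comparison at $\nu = 1$. The one point you leave open, the case $n = 2$ at $\nu = 1$ (where the relevant coefficient has degree $0$ rather than negative degree), is equally unaddressed in the paper, whose assertion that comparing degrees in (\ref{t2-eq1}) forces $a_l = 0$ likewise requires $n \geq 3$.
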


\section{Proof of Theorem 2}
In this section we give a proof of Theorem 2 with all details. The reader is advised to read the previous section before reading this section.

We first prove
\begin{proposition}\label{normal}
Let $\xi \in R^m_f\setminus R^m$. Then normal form of $\xi$ exists and is unique.
\end{proposition}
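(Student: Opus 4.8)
The plan is to prove existence and uniqueness of the normal form of $\xi \in R^m_f \setminus R^m$ by a fairly elementary argument exploiting the fact that $R = K[X_1,\ldots,X_n]$ is a UFD and $f$ is a fixed polynomial. For existence, I would start from \emph{any} representation $\xi = (b_1/f^k, \ldots, b_m/f^k)'$ with a common denominator $f^k$ (which certainly exists, clearing denominators over $R_f$). If $f \mid b_j$ for every $j$, write $b_j = f c_j$ and replace the representation by $(c_1/f^{k-1},\ldots,c_m/f^{k-1})'$; this strictly decreases the exponent. Iterating, after finitely many steps we reach a representation $(a_1/f^i,\ldots,a_m/f^i)'$ in which $f \nmid a_j$ for some $j$. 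The only thing to check is that the process does not run past $i = 0$: if it did, $\xi$ would lie in $R^m$, contrary to hypothesis, so in fact $i \geq 1$ at termination. This gives all three defining conditions of a normal form.

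For uniqueness, suppose $(a_1/f^i,\ldots,a_m/f^i)'$ and $(a_1'/f^{i'},\ldots,a_m'/f^{i'})'$ are both normal forms of $\xi$, and without loss of generality $i' \geq i$. Equality in $R^m_f$ componentwise means $a_j/f^i = a_j'/f^{i'}$ in $R_f$ for each $j$, i.e.\ $f^N(f^{i'} a_j - f^i a_j') = 0$ in $R$ for some $N$; since $R$ is a domain and $f \neq 0$, this forces $f^{i'} a_j = f^i a_j'$, hence $f^{i'-i} a_j = a_j'$ for all $j$. If $i' > i$, then $f \mid a_j'$ for every $j$, contradicting condition (2) for the second normal form. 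Therefore $i' = i$, and then $a_j = a_j'$ for all $j$ since $R$ is a domain. Hence the normal form is unique.

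The main (and really the only) obstacle is bookkeeping: making sure the ``pull out a factor of $f$'' procedure in the existence half is phrased so that it visibly terminates and that termination cannot overshoot into $R^m$ — which is exactly where the hypothesis $\xi \notin R^m$ is used, and where condition $i \geq 1$ comes from. I expect this to be short. One should also note at the outset that a common-denominator representation exists because $R^m_f$ is the localization of $R^m$ at the powers of $f$, so every element of $R^m_f$ has the form $b/f^k$ with $b \in R^m$. Beyond that, every step reduces to the fact that $R$ is an integral domain in which $f$ is a nonzero element, so there are no subtleties.
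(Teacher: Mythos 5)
Your argument is correct and follows essentially the same elementary route as the paper: existence by manipulating a common-denominator representation (the paper takes the maximum of the exponents of components written in lowest terms, you iteratively cancel factors of $f$, using $\xi \notin R^m$ to guarantee the exponent stays $\geq 1$), and uniqueness by the identical domain argument showing the two exponents must coincide and then the numerators agree.
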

\begin{proof}
\textit{Existence:} Let $\xi \in R^m_f\setminus R^m$. Let $\xi = (b_1/f^{i_1},b_2/f^{i_2},\cdots, b_m/f^{i_m})^\prime$
with $f \nmid b_j$ if $b_j \neq 0$. Note $i_j \leq 0$ is possible. 
 Let 
$$ i_r = \max \{ i_j \mid i_j \geq 1 \ \text{and} \ b_j \neq 0 \}.$$
Notice $i_r \geq 1$. Then
\[
\xi = (\frac{b_1f^{i_r-i_1}}{f^{i_r}}, \frac{b_2f^{i_r-i_2}}{f^{i_r}}, \cdots, \frac{b_mf^{i_r-i_m}}{f^{i_r}})^\prime
\]
Note that $f \nmid b_r$. Thus the expression above is a normal form of $\xi$.

\textit{Uniqueness:} Let $(a_1/f^i,\cdots,a_m/f^i)^\prime$ and $(b_1/f^r,\cdots,b_m/f^r)^\prime$ be two normal forms of $\xi$. We first assert that $i<r$ is not possible. For if this holds then note as
$a_j/f^i = b_j/f^r$ we get $b_j = a_jf^{r-i}$. So $f | b_j$ for all $j$; a contradiction. 

A similar argument shows that $i> r$ is not possible. So $i =r$. Thus $a_j = b_j$ for all $j$. Thus the normal form of $\xi$ is unique.
\end{proof}

\s \label{In-Rf} Let $\xi \in Z_1(\bP,R_f)_{-\omega}$ be non-zero. Let $\xi = (\xi_1,\ldots,\xi_n)^\prime$. Note that
\[
\xi \in \left( R_f(\omega_1)\oplus R_f(\omega_2) \oplus \cdots \oplus R_f(\omega_n)\right)_{-\omega}
\] 
It follows that 
$$\xi_j \in  (R_f)_{-\sum_{k \neq j}\omega_k}.$$
It follows that $\xi \in R^{n}_f \setminus R^n$. 

\s \label{comput-degrees} Let $(a_1/f^i,\cdots,a_n/f^i)^\prime$  be the normal form of $\xi$. Then
\[
\deg a_j = i\deg f -\sum_{k \neq j}\omega_k.
\]
In particular going mod $f$ we get
$$ \overline{a_j} \in A\left(-\deg f + \omega_j \right)_{(i+1)\deg f - \omega }.$$
Notice 
$ \deg \partial{f}/\partial X_j  = \deg f - \omega_j$. It follows that
\[
(\ov{a_1},\cdots,\ov{a_n})^\prime \in Z_1(\partial(f);A)_{(i+1)\deg f - \omega }.
\]
Thus $\theta(\xi) \in H_1(\partial(f);A)_{(i+1)\deg f - \omega }.$ 

\s Let  $\K = \K(\partial;R_f)$ be the De Rahm complex on $R_f$ written homologically. So
\[
\K = \cdots \rt \K_3\xrightarrow{\phi_3} \K_2\xrightarrow{\phi_2} \K_1\xrightarrow{\phi_1} \K_0 \rt 0.
\]
Here $\K_0 = R_f$, $\K_1 = \bigoplus_{k = 1}^{n} R_f(\omega_k)$,
\[
\K_2 = \bigoplus_{1 \leq i < j \leq n}R_f(\omega_i + \omega_j) \quad \text{and} \ \K_3 = \bigoplus_{1 \leq i < j <l\leq n}R_f(\omega_i + \omega_j + \omega_l).
\]
Let $\K^\prime = \K(\partial(f);A)$ be the Koszul complex on $A$ \wrt \ 

\noindent $\partial f/\partial X_1,\ldots, \partial f/\partial X_n$. So
\[
\K^\prime = \cdots \rt \K_3^\prime\xrightarrow{\psi_3} \K_2^\prime\xrightarrow{\psi_2} \K_1^\prime\xrightarrow{\psi_1} \K_0^\prime \rt 0.
\]
Here $\K_0^\prime = A$, $\K^\prime_1 = \bigoplus_{k = 1}^{n}A(-\deg f + \omega_k)$,
\[
\K_2^\prime = \bigoplus_{1 \leq i < j \leq n}A(-2\deg f + \omega_i + \omega_j) \quad \text{and} \ \K_3^\prime = \bigoplus_{1 \leq i < j <l\leq n}A(-3\deg f + \omega_i + \omega_j + \omega_l).
\]

We now give
\begin{proof}[proof of Proposition \ref{B}]
Let $u \in B_1(\partial ; R_f)_{-\omega}$ be non-zero. Let $\xi \in (\K_2)_{-\omega}$ be homogeneous with $\phi_2(\xi) = u$.
Let $\xi = (\xi_{ij} \mid 1\leq i < j \leq n)^\prime$. Notice that
\[
\xi_{ij} \in R_f(\omega_i + \omega_j)_{-\omega} = (R_f)_{-\sum_{k\neq i,j} \omega_k}.
\]
It follows that $\xi \in R_f^{\binom{n}{2}}\setminus R^{\binom{n}{2}}$. Set
\[
c = \min\{j \mid j= L(\xi)\ \text{where} \ \phi_2(\xi) = u \ \text{and}\ \xi \in (\K_2)_{-\omega} \ \text{is homogeneous} \}.
\]
Notice $c \geq 1$. Let $\xi \in (\K_2)_{-\omega}$ be such that $L(\xi) = c$ and $\phi_2(\xi) = u$.
Let $(b_{ij}/f^c  \mid 1\leq i < j \leq n)^\prime$ be the normal form of $\xi$. Let $u = (u_1,\ldots,u_n)^\prime$.
Then for $l = 1,\ldots,n$ 
\[
u_l = \sum_{i<l}\frac{\partial}{\partial X_i}\left(\frac{b_{il}}{f^c}\right) - \sum_{j>l}\frac{\partial}{\partial X_j}\left(\frac{b_{lj}}{f^c}\right)
\]
So 
$$ u_l = \frac{f}{f^{c+1}}\left(\sum_{i<l}\frac{\partial(b_{il})}{\partial X_i} - \sum_{j>l}\frac{\partial(b_{lj})}{\partial X_j}\right) + \frac{c}{f^{c+1}}\left(-\sum_{i<l}b_{il}\frac{\partial f}{\partial X_i}
+ \sum_{j>l}b_{lj}\frac{\partial f}{\partial X_j} \right)$$

Set
\[
v_l = c\left(-\sum_{i<l}b_{il}\frac{\partial f}{\partial X_i}
+ \sum_{j>l}b_{lj}\frac{\partial f}{\partial X_j} \right)
\]
Therefore
\[
u_l = \frac{f* + v_l}{f^{c+1}}.
\]
\textit{Claim:} $f\nmid v_l$ for some $l$.

First assume the claim. Then $((f* +v_1)/f^{c+1},\ldots,(f* + v_n)/f^{c+1})^\prime$ is normal form of $u$.
So 
\[
\theta(u) = [ (\ov{v_1},\ldots,\ov{v_n})^\prime] =  [\psi_2(-c\ov{b})] = 0.
\]

We now prove our claim. Suppose if possible $f | v_l$ for all $l$.
Then
\[
\psi_2(-c\ov{b}) = (\ov{v_1},\ldots,\ov{v_l})^\prime = 0.
\]
So $-cb \in Z_2(\partial(f);A)$. As $H_2(\partial(f); A) = 0$ we get
$-cb \in B_2(\partial(f);A)$.
Thus $-c\ov{b} = \psi_3(\ov{\gamma})$. Here
\[
\gamma = (\gamma_{ijl} \mid 1\leq i<j<l \leq n )^\prime.
\]
So
\begin{equation}\label{bb}
-cb_{ij} = \sum_{k < i <j}\gamma_{kij}\frac{\partial f}{\partial X_k} - \sum_{i < k <j}\gamma_{ikj}\frac{\partial f}{\partial X_k} + \sum_{i < j < k}\gamma_{ijk}\frac{\partial f}{\partial X_k}  + \alpha_{ij}f.
\end{equation}
We need to compute degree of $\gamma_{ijl}$. Note that $\xi \in (\K_2)_{-\omega}$. So
\[
\frac{b_{ij}}{f^c} \in \left(R_f(\omega_i + \omega_j)\right)_{-\omega}.
\]
It follows that
\begin{equation}\label{bb2}
\deg b_{ij} = c\deg f - \omega + \omega_i + \omega_j
\end{equation}
It can be easily checked that
\[
\ov{b} \in \left(\K_2^\prime\right)_{(c+2)\deg f - \omega}.
\]
So 
$$\gamma \in \left(\K_3^\prime\right)_{(c+2)\deg f - \omega}.$$
It follows that
\begin{equation}\label{bb3}
\deg \gamma_{ijl} = (c-1)\deg f - \omega + \omega_{i} + \omega_j + \omega_l.
\end{equation}
We first consider the case when $c = 1$. Then by equation \ref{bb} we have $\alpha_{ij} = 0$.
Also
$$\deg \gamma_{ijl} = -\omega + \omega_i + \omega_j + \omega _l < 0 \quad \text{if} \ n > 3.$$ 
So if $n > 3$ we get $\gamma_{ijl} = 0$. So $b =0$ and so $\xi = 0$, a contradiction. 

We now consider the case when $n = 3$. Note that $\gamma = \gamma_{123} $ is a constant. Thus
$$ b = \left(\gamma \frac{\partial f}{\partial X_3}, -\gamma \frac{\partial f}{\partial X_2}, \gamma \frac{\partial f}{\partial X_3} \right)^\prime. $$
A direct computation yields $ u = 0$, a contradiction.

We now consider the case when $c \geq 2$.
Notice by equation \ref{bb} we have
\[
\frac{-cb_{ij}}{f^c} = \frac{1}{f^c}\sum_{k < i <j}\gamma_{kij}\frac{\partial f}{\partial X_k} - \frac{1}{f^c} \sum_{i < k <j}\gamma_{ikj}\frac{\partial f}{\partial X_k} +  \frac{1}{f^c}\sum_{i < j < k}\gamma_{ijk}\frac{\partial f}{\partial X_k}  + \frac{\alpha_{ij}}{f^{c-1}}.
\]
Notice
\[
\frac{\gamma_{kij}\partial {f}/\partial X_k}{f^c} = \frac{\partial}{\partial X_k} \left(\frac{\gamma_{kij}/(1-c)}{f^{c-1}} \right)  - \frac{*}{f^{c-1}}
\]
Put
$$ \widetilde{ \gamma_*}  = \frac{1}{c(c-1)}\gamma_*.$$
So we obtain
\[
\frac{b_{ij}}{f^c} = \sum_{k < i < j}\frac{\partial}{\partial X_k}\left(\frac{\widetilde{\gamma_{kij}}}{f^{c-1}}\right)
-  \sum_{i < k < j}\frac{\partial}{\partial X_k}\left(\frac{\widetilde{\gamma_{ikj}}}{f^{c-1}}\right)
+  \sum_{i < j < k}\frac{\partial}{\partial X_k}\left(\frac{\widetilde{\gamma_{ijk}}}{f^{c-1}}\right)
+ \frac{\widetilde{b_{ij}}}{f^{c-1}}.
\]
Set 
$$\delta = \left( \frac{\widetilde{\gamma_{ijl}}}{f^{c-1}} \mid 1 \leq i < j < l \leq n \right) \quad \text{and} \ \widetilde{\xi} =  \left( \frac{\widetilde{b_{ij}}}{f^{c-1}} \mid 1 \leq i < j  \leq n \right). $$
Then
$$\xi = \phi_3(\delta) + \widetilde{\xi}.$$
So we have $u = \phi_2(\xi) = \phi_2(\widetilde{\xi})$. This contradicts choice of $c$.
\end{proof}

\s By Theorem \ref{gEv} we have
\[
H_1(\partial ; R_f) = H_1(\partial ; R_f)_{-\omega} = \frac{Z_1(\partial ; R_f)_{-\omega}}{B_1(\partial ; R_f)_{-\omega}}. 
\]
Let $x \in H_1(\bP; R_f)$ be non-zero. Define
\[
L(x) = \min \{ L(\xi) \mid x = [\xi], \text{where} \ \xi \in Z_1(\bP,R_f)_{-\omega}. \}.
\]
Let $\xi = (\xi_1,\ldots,\xi_n)^\prime \in Z_1(\bP,R_f)_{-\omega}$ be such that $x = [\xi]$. So $\xi \in (\K_1)_{-\omega}$. Thus $\xi_i\in R_f(+\omega_i)_{-\omega}$. So if $\xi \neq 0$ then 
$\xi \in R_f^n \setminus R^n$. It follows that
$L(\xi) \geq 1$. Thus $L(x) \geq 1$.

 We now define a function
\begin{align*}
\widetilde{\theta} \colon H_1(\bP,R_f) &\rt H_1(\partial(f);A) \\
       x &\mapsto \begin{cases} \theta(\xi) & \text{if} \ x\neq 0, x = [\xi], \text{and} \ L(x) = L(\xi),\\ 0 & \text{if} \ x = 0. \end{cases}
\end{align*}
\begin{proposition}\label{indep-1}(with hypotheses as above)
 $\widetilde{\theta(x)}$ is independent of choice of $\xi$.
\end{proposition}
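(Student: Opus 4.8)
The plan is the following. Suppose $x \in H_1(\bP, R_f)$ is non-zero and that $\xi, \xi^\prime \in Z_1(\bP,R_f)_{-\omega}$ both satisfy $x = [\xi] = [\xi^\prime]$ and $L(\xi) = L(\xi^\prime) = L(x)$; I must show $\theta(\xi) = \theta(\xi^\prime)$. Write $i = L(x)$, so $i \geq 1$. By the definition of $L$ and the uniqueness part of Proposition \ref{normal}, the normal forms are $\xi = (a_1/f^i,\ldots,a_n/f^i)^\prime$ and $\xi^\prime = (a_1^\prime/f^i,\ldots,a_n^\prime/f^i)^\prime$ for suitable $a_j, a_j^\prime \in R$, and by the construction of $\theta$ we have $\theta(\xi) = [(\ov{a_1},\ldots,\ov{a_n})^\prime]$ and $\theta(\xi^\prime) = [(\ov{a_1^\prime},\ldots,\ov{a_n^\prime})^\prime]$ in $H_1(\partial(f);A)$. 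Now $\eta := \xi - \xi^\prime = ((a_1-a_1^\prime)/f^i,\ldots,(a_n-a_n^\prime)/f^i)^\prime$ is a homogeneous element of $B_1(\bP,R_f)_{-\omega}$, and $L(\eta) \leq i$ by Proposition \ref{prop-l}.

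The argument then splits into two cases, the point being that $\theta$ is only additive on cycles whose normal forms have a common denominator. \emph{Case 1: $L(\eta) < i$.} If $f$ did not divide $a_j - a_j^\prime$ for some $j$, then the displayed expression for $\eta$ would already satisfy the three defining conditions of a normal form, hence would \emph{be} the normal form of $\eta$ by Proposition \ref{normal}, forcing $L(\eta) = i$; so in this case $f \mid (a_j - a_j^\prime)$ for every $j$. Therefore $\ov{a_j} = \ov{a_j^\prime}$ in $A$ for all $j$, and $\theta(\xi) = \theta(\xi^\prime)$ directly. \emph{Case 2: $L(\eta) = i$.} Then $f$ does not divide $a_j - a_j^\prime$ for some $j$, so $((a_1-a_1^\prime)/f^i,\ldots,(a_n-a_n^\prime)/f^i)^\prime$ is the normal form of $\eta$, and hence
\[
\theta(\eta) = [(\ov{a_1 - a_1^\prime},\ldots,\ov{a_n - a_n^\prime})^\prime] = \theta(\xi) - \theta(\xi^\prime),
\]
since reduction mod $f$ is $R$-linear and passage to a homology class is additive. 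On the other hand $\eta$ is a non-zero element of $B_1(\bP,R_f)_{-\omega}$, so Proposition \ref{B} gives $\theta(\eta) = 0$, and again $\theta(\xi) = \theta(\xi^\prime)$. This settles both cases, so $\widetilde{\theta}(x)$ is well defined.

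The only delicate point, and the step I expect to require the most care, is the normal-form bookkeeping in Case 1: one must verify that $L(\eta) < i$ genuinely forces $f$ to divide \emph{every} numerator difference $a_j - a_j^\prime$, which is exactly where the uniqueness assertion of Proposition \ref{normal} is invoked. Once that is in place, Case 2 is immediate from Proposition \ref{B}, and no additional cancellation phenomena can arise because both $\xi$ and $\xi^\prime$ were chosen with $L$ equal to the common minimal value $L(x)$; there is no need for any separate treatment of $x = 0$ beyond the definition.
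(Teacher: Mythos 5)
Your proof is correct and follows essentially the same route as the paper: write the difference of the two representing cycles as a homogeneous element of $B_1(\bP,R_f)_{-\omega}$, split into the cases where its $L$-value is strictly less than, or equal to, $L(x)$, and in the second case combine additivity of reduction mod $f$ with Proposition \ref{B}. The only cosmetic difference is that in the first case you argue by divisibility of the numerator differences (via uniqueness of normal forms) rather than, as the paper does, writing out the normal form of the boundary term with its smaller exponent, but these amount to the same bookkeeping.
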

\begin{proof}
Suppose $x = [\xi_1] = [\xi_2]$ is non-zero and $L(x) = L(\xi_1) = L(\xi_2) = i$.
Let $(a_1/f^i,\ldots,a_n/f^i)^\prime$ be normal form of $\xi_1$ and let $(b_1/f^i,\ldots,b_n/f^i)^\prime$ be normal form of $\xi_2$.
It follows that $\xi_1 = \xi_2 + \delta$ where $\delta \in B_1(\partial ; R_f)_{-\omega}$.
By Proposition \ref{prop-l}.1 we get $j = L(\delta) \leq i$. 
Let $(c_1/f^j,\ldots,c_n/f^j)^\prime$ be normal form of $\delta$.
We consider two cases.

Case 1. $j < i$.  Then note that $a_k = b_k + f^{i-j}c_k$ for $k = 1,\ldots,n$. It follows that
\[
\theta(\xi_1)=  [(\ov{a_1},\ldots,\ov{a_n})]  =  [(\ov{b_1},\ldots,\ov{b_n})] = \theta(\xi_2).
\]

Case 2. $j = i$.  Then note that $a_k = b_k + c_k$ for $k = 1,\ldots,n$. It follows that
\[
\theta(\xi_1)=  \theta(\xi_2) + \theta(\delta).
\]
However by Proposition \ref{B}, $\theta(\delta) = 0$. So $\theta(\xi_1)=  \theta(\xi_2)$.
Thus $\widetilde{\theta(x)}$ is independent of 
choice of $\xi$.
\end{proof}

\s We now construct a filtration $\F = \{\F_\nu\}_{\nu\geq 0}$ of $H_1(\bP,R_f)$. Set
\[
\F_\nu = \{ x \in H_1(\bP,R_f)\mid L(x) \leq \nu \}
\]
We prove
\begin{proposition}
\begin{enumerate}[\rm (1)]
\item
$\F_\nu$ is a $K$ subspace of $H_1(\bP;R_f)$.
\item
$\F_{\nu} \supseteq \F_{\nu-1}$ for all $\nu \geq 1$.
\item
$\F_\nu = H_1(\bP;R_f)$ for all $\nu \gg 0$.
\item
$\F_0 = 0$.
\end{enumerate}
\end{proposition}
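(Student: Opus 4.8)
The strategy is to deduce all four claims directly from the formal properties of the length function $L$ recorded in Proposition~\ref{prop-l}, from the finite-dimensionality of $H_1(\bP,R_f)$, and from the observation in \ref{In-Rf} that every nonzero cycle in $Z_1(\bP,R_f)_{-\omega}$ lies in $R_f^n\setminus R^n$, so that its $L$-value is a well-defined positive integer and the minimum defining $L(x)$ is taken over a nonempty set bounded below by $1$ whenever $x\neq 0$.

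For (1), I would argue as follows. Closure under scalars: if $\alpha=0$ then $\alpha x=0$ and $L(0)=-\infty\leq\nu$; if $\alpha\neq 0$, choose a representative $\xi\in Z_1(\bP,R_f)_{-\omega}$ with $L(\xi)=L(x)$, note that $\alpha\xi$ represents $\alpha x$, and apply Proposition~\ref{prop-l}(4) to get $L(\alpha x)\leq L(\alpha\xi)=L(\xi)\leq\nu$. Closure under sums: pick representatives $\xi_1,\xi_2$ of $x_1,x_2$ realizing $L(x_1),L(x_2)$; their sum $\xi_1+\xi_2$ is again a homogeneous cycle of degree $-\omega$ representing $x_1+x_2$, so Proposition~\ref{prop-l}(3) gives $L(x_1+x_2)\leq L(\xi_1+\xi_2)\leq\max\{L(x_1),L(x_2)\}\leq\nu$. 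Hence $\F_\nu$ is a $K$-subspace. Part (2) is then immediate from the definition, since $L(x)\leq\nu-1$ forces $L(x)\leq\nu$.

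For (3), I would use that $H^1_{(f)}(R)$ is holonomic and that $H_1(\bP,R_f)\cong H_1(\bP,H^1_{(f)}(R))$ (\cite[Lemma 2.7]{P}), whence $H_1(\bP,R_f)$ is a finite-dimensional $K$-vector space; fixing a basis $x_1,\dots,x_d$ and setting $\nu_0=\max_i L(x_i)$, part (1) (or Proposition~\ref{prop-l}(7) applied to chosen representatives of the $x_i$) shows $L(x)\leq\nu_0$ for every $x\in H_1(\bP,R_f)$, so $\F_\nu=H_1(\bP,R_f)$ for all $\nu\geq\nu_0$. For (4), if $x\neq 0$ then every representative $\xi\in Z_1(\bP,R_f)_{-\omega}$ is nonzero, hence lies in $R_f^n\setminus R^n$ by \ref{In-Rf}, so $L(\xi)\geq 1$ and thus $L(x)\geq 1$; combined with $L(0)=-\infty$ this yields $\F_0=\{0\}$. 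I do not expect a genuine obstacle here: the only points needing a word of care are that the representatives combined in (1) and (3) remain homogeneous cycles in degree $-\omega$, so that Proposition~\ref{prop-l} applies to them, and that this is legitimate precisely because $H_1(\bP,R_f)$ is concentrated in degree $-\omega$ by Theorem~\ref{gEv}.
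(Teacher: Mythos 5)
Your proposal is correct and follows essentially the same route as the paper: closure of $\F_\nu$ under scalars and sums via Proposition \ref{prop-l} applied to representatives realizing $L(x)$, stabilization via a finite $K$-basis and the maximum of the $L$-values of its elements, and $\F_0=0$ from $L(x)\geq 1$ for $x\neq 0$ (which rests on \ref{In-Rf}). The only difference is that you make explicit the finite-dimensionality justification (holonomicity plus \cite[Lemma 2.7]{P}), which the paper uses implicitly when it picks a finite basis.
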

\begin{proof}
(1) Let $x \in \F_\nu$ and let $\alpha \in K$. Then by * 
\[
L(\alpha x)  \leq L(x) \leq \nu.
\]
So $\alpha x \in \F_\nu$.

Let $x,x^\prime \in \F_\nu$ be non-zero. Let $\xi, \xi^\prime \in Z_1(\partial; R_f)$ be such that $x = [\xi], x^\prime = [\xi^\prime]$ and $L(x) = L(\xi), L(x^\prime) = L(\xi^\prime)$ 
Then $x + x^\prime = [\xi + \xi^\prime]$. It follows that
\[
L(x + x^\prime) \leq L(\xi + \xi^\prime) \leq \max\{ L(\xi) , L(\xi^\prime) \} \leq \nu. 
\]
Note that the second inequality follows from Proposition \ref{prop-l}. Thus $x + x^\prime \in \F_\nu$.

(2) This is clear.

(3) Let  $\mathcal{B} = \{ x_1,\ldots,x_m \} $ be a $K$-basis of  $H_1(\partial; R_f) =  H_1(\partial; R_f)_{-\omega}$.  Let
$$c  = \max\{ L(x_i) \mid i = 1,\ldots, m \}. $$
We claim that
\[
\F_\nu = H_1(\partial; R_f) \quad \text{for all} \ \nu \geq c.
\] 
Fix $\nu \geq c$.
Let $\xi_i \in Z_1(\partial;R_f)_{-\omega}$ be such that $x_i = [\xi_i]$ and $L(x_i) = L(\xi_i)$ for $i =1,\ldots,m$.

Let $u \in H_1(\partial; R_f)$. Say $u  = \sum_{i=1}^{m}\alpha_i x_i$ for some $\alpha_1,\ldots,\alpha_m \in K$. Then 
$u = [ \sum_{i=1}^{m} \alpha_i \xi_i]$.
It follows that
\[
L(u) \leq L(\sum_{i=1}^{m}\alpha_i x_i) \leq \max\{ L(\xi_i) \mid i = 1,\ldots,m \} = c \leq \nu.
\]
Here the second inequality follows from  Proposition \ref{prop-l}. So $u \in \F_\nu$. Thus $\F_\nu = H_1(\partial;R_f)$.

(4) If $x \in H_1(\partial;R_f)$ is non-zero then $L(x) \geq 1$. It follows that $\F_{0} = 0$.
\end{proof}

\s  Let $\G = \bigoplus_{\nu \geq 1} \F_{\nu}/\F_{\nu-1}$. For $\nu \geq 1$ we define
\begin{align*}
\eta_{\nu} \colon \frac{\F_{\nu}}{\F_{\nu-1}} &\rt H_1(\partial(f);A)_{(\nu + 1)\deg f - \omega}\\
                 u &\mapsto \begin{cases}0, & \text{if}  \  u = 0; \\ \widetilde{\theta}(x), & \text{if} \  u = x + \F_{\nu -1} \ \text{is non-zero}.                                                 \end{cases}
\end{align*}

\begin{proposition}\label{indep-2}[with hypotheses as above.]
 $\eta_{\nu}(u)$ is independent of choice of $x$.
 \end{proposition}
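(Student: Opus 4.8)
The plan is to unwind the definitions and reduce everything to a comparison of normal forms. Suppose $u = x + \F_{\nu-1} = x' + \F_{\nu-1}$ is non-zero, with $x, x' \in \F_\nu$. First I would observe that $u \neq 0$ forces $x, x' \notin \F_{\nu-1}$, hence $L(x), L(x') > \nu - 1$; combined with $L(x), L(x') \leq \nu$ (as $x, x' \in \F_\nu$) this gives $L(x) = L(x') = \nu$, and in particular $x$ and $x'$ are both non-zero. Set $y = x - x' \in \F_{\nu-1}$. If $y = 0$ there is nothing to prove, so I would assume $y \neq 0$, whence $1 \leq L(y) \leq \nu - 1$ and $\nu \geq 2$.

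Next, following the recipe defining $\widetilde{\theta}$, I would choose a cycle $\xi' \in Z_1(\bP, R_f)_{-\omega}$ with $x' = [\xi']$ and $L(\xi') = L(x') = \nu$, and a cycle $\zeta \in Z_1(\bP,R_f)_{-\omega}$ with $y = [\zeta]$ and $L(\zeta) = L(y) \leq \nu - 1$. Then $\xi := \xi' + \zeta$ is a cycle representing $x$, and since $L(\zeta) < L(\xi')$, Proposition \ref{prop-l}.1 yields $L(\xi) = L(\xi') = \nu = L(x)$. Thus $\xi$ is an admissible representative of $x$ in the sense of the definition of $\widetilde{\theta}$, so $\widetilde{\theta}(x) = \theta(\xi)$, and likewise $\widetilde{\theta}(x') = \theta(\xi')$.

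Finally I would compare normal forms. Writing the normal form of $\xi'$ as $(a_1/f^\nu, \ldots, a_n/f^\nu)^\prime$ and that of $\zeta$ as $(c_1/f^j, \ldots, c_n/f^j)^\prime$ with $j = L(\zeta) < \nu$, the components of $\xi = \xi' + \zeta$ become $(a_k + f^{\nu-j}c_k)/f^\nu$; since $\nu - j \geq 1$ and $f \nmid a_k$ for some $k$, this is the normal form of $\xi$, and reduction modulo $f$ kills the $f^{\nu-j}c_k$ terms. Hence $\theta(\xi) = [(\ov{a_1}, \ldots, \ov{a_n})^\prime] = \theta(\xi')$, and combining with the previous paragraph gives $\widetilde{\theta}(x) = \widetilde{\theta}(x')$, as required. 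The one point that needs care — and which I regard as the crux — is the invocation of Proposition \ref{prop-l}.1 to ensure that correcting $\xi'$ by the lower-order cycle $\zeta$ does not drop $L$ below $\nu$; this is precisely what makes $\xi$ eligible for computing $\widetilde{\theta}(x)$. Beyond that the argument runs exactly parallel to the proof of Proposition \ref{indep-1}, relying on the same observation that $\theta$ of a cycle depends only on the top-order part of its normal form, so no step should present a genuine obstacle.
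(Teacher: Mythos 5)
Your proof is correct, but it is organized differently from the paper's. The paper picks minimal representatives $\xi$ and $\xi'$ of $x$ and $x'$ \emph{independently}, so the relation $[\xi]=[\xi'+\delta]$ only gives $\xi=\xi'+\delta+\alpha$ with an extra boundary term $\alpha\in B_1(\bP;R_f)_{-\omega}$; it must then split into the cases $L(\alpha)<\nu$ and $L(\alpha)=\nu$, invoking Proposition \ref{B} (hence the isolated singularity hypothesis) in the second case to kill $\theta(\alpha)$. You instead \emph{manufacture} the representative of $x$ as $\xi=\xi'+\zeta$ with $\zeta$ a minimal representative of $y=x-x'\in\F_{\nu-1}$, use Proposition \ref{prop-l}(1) to see $L(\xi)=\nu=L(x)$, and then appeal to Proposition \ref{indep-1} to assert $\widetilde{\theta}(x)=\theta(\xi)$; after that only the easy normal-form comparison (the case $j<\nu$, where the correction $f^{\nu-j}c_k$ dies modulo $f$) is needed. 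This is legitimate and not circular, since \ref{indep-1} is proved beforehand; the boundary correction and the appeal to Proposition \ref{B} are simply absorbed into that earlier well-definedness statement rather than repeated. What your route buys is a shorter, one-case argument; what the paper's route makes visible is exactly where the boundary term (and thus Proposition \ref{B}) enters when two admissible representatives are compared directly. Your handling of the degenerate cases ($y=0$, and $\nu=1$ forcing $y=0$ since $\F_0=0$) is also fine.
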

\begin{proof}
Suppose $u = x + \F_{\nu-1}   = x^\prime + \F_{\nu -1}$ be non-zero. Then $x = x^\prime + y$ where $y \in \F_{\nu -1}$. As $u \neq 0$ we have that $ x,x^\prime  \in \F_{\nu} \setminus \F_{\nu -1}$.  So $L(x) = L(x^\prime) = \nu$. Say $x = [\xi]$, $x^\prime = [\xi^\prime] $ and $y = [\delta]$ where $\xi,\xi^\prime, \delta \in Z_1(\partial;R_f)$ with 
$L(\xi) = L(\xi^\prime)  = \nu$ and $L(\delta) = L(y)  = k \leq \nu-1$.
So we have that $\xi = \xi^\prime + \delta + \alpha$ where $\alpha \in B_1(\partial; R_f)_{-\omega}$.  Let $L(\alpha) =  r$.  Note $r \leq \nu$.

Let  $(a_1/f^{\nu},\ldots,a_n/f^\nu)^\prime$, $(a_1^\prime/f^\nu, \ldots,a_n^\prime/f^\nu)^\prime$, $(b_1/f^k,\ldots,b_n/f^k)^\prime$  and  \\  $(c_1/f^r,\ldots,c_n/f^r)\prime$ be normal forms of
$\xi, \xi^\prime, \delta$ and $\alpha$ respectively.  So we have
\[
a_j = a_j^\prime + f^{\nu - k}b_j +  f^{\nu - r}c_j \quad \text{for} \ j = 1,\ldots,n.
\]
Case 1:  $r < \nu$. In this case we have that $\ov{a_j} = \ov{a_j^\prime}$ in $A$ for each $j =1,\ldots,n$. So $\theta(\xi) =  \theta(\xi^\prime)$. Thus
$\widetilde{\theta}(x) = \widetilde{\theta}(x^\prime)$.

Case 2: $r = \nu$. In this case notice that $\ov{a_j} = \ov{a_j^\prime} + \ov{c_j}$ in $A$ for each $j =1,\ldots,n$. So $\theta(\xi) =  \theta(\xi^\prime) + \theta(\alpha)$. 
However $\theta(\alpha) = 0$ as $\alpha \in  B_1(\partial;R_f)_{-\omega}$; see Proposition \ref{B}.
 Thus
$\widetilde{\theta}(x) = \widetilde{\theta}(x^\prime)$.
\end{proof} 
 
Note that neither $\theta$ nor $\widetilde{\theta}$ is linear. However we prove the following:

\begin{proposition}\label{linear}(with Notation as above).  For all $\nu \geq 1$,
$\eta_{\nu}$ is $K$-linear.
\end{proposition}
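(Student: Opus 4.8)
The plan is to deduce $K$-linearity of $\eta_\nu$ from the fact that $\theta$ is ``additive up to terms of lower $L$-value'', together with the observation that normal forms add coordinate-wise once they are written over a common denominator. The key reduction is that, by construction and by the well-definedness results \ref{indep-1} and \ref{indep-2}, one has $\eta_\nu(u) = \widetilde{\theta}(x) = \theta(\xi)$ for \emph{any} representative $\xi \in Z_1(\bP,R_f)_{-\omega}$ of $x$ with $L(\xi) = L(x)$; so the whole problem is to choose representatives cleverly and compute $\theta$ on their sums and scalar multiples. I would also record at the outset that $u = x + \F_{\nu-1} \neq 0$ forces $L(x) = \nu$, since $x \notin \F_{\nu-1}$ gives $L(x) \geq \nu$ while $x \in \F_\nu$ gives $L(x) \leq \nu$.

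First I would handle scalar multiplication. The case $\alpha = 0$ is immediate. For $\alpha \in K^*$ and $u = x + \F_{\nu-1}$ non-zero I would pick $\xi$ representing $x$ with $L(\xi) = \nu$; then $\alpha u \neq 0$, and Proposition \ref{prop-l}.4 gives $L(\alpha\xi) = \nu$, so that in fact $L(\alpha x) = \nu = L(\alpha\xi)$ and $\eta_\nu(\alpha u) = \theta(\alpha\xi)$. Since multiplying the normal form $(a_1/f^\nu,\ldots,a_n/f^\nu)^\prime$ of $\xi$ by the nonzero scalar $\alpha$ produces an expression in which some numerator is still not divisible by $f$, this is the normal form of $\alpha\xi$, whence $\theta(\alpha\xi) = [(\ov{\alpha a_1},\ldots,\ov{\alpha a_n})^\prime] = \alpha\,\theta(\xi) = \alpha\,\eta_\nu(u)$.

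Next I would prove additivity. If $u$, $u'$, or $u+u'$ is zero the identity is trivial or follows from the scalar case (taking $u' = -u$ when $u + u' = 0$), so I would assume all three non-zero; then $L(x) = L(x') = L(x+x') = \nu$, using that $\F_\nu$ is a $K$-subspace for the last equality. Choosing representatives $\xi, \xi'$ with $L(\xi) = L(\xi') = \nu$, Proposition \ref{prop-l} gives $L(\xi+\xi') \leq \nu$, while $x + x' = [\xi+\xi']$ forces $\nu = L(x+x') \leq L(\xi+\xi')$, so $L(\xi+\xi') = \nu$ and $\eta_\nu(u+u') = \theta(\xi+\xi')$. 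Writing the normal forms of $\xi$ and $\xi'$ over the common denominator $f^\nu$, say $\xi = (a_1/f^\nu,\ldots,a_n/f^\nu)^\prime$ and $\xi' = (b_1/f^\nu,\ldots,b_n/f^\nu)^\prime$, the vector $\xi+\xi'$ is their coordinate-wise sum over $f^\nu$; since $L(\xi+\xi') = \nu$, $f$ cannot divide every numerator $a_j+b_j$, so this is the normal form of $\xi+\xi'$, and since reduction mod $f$ and passage to Koszul homology are both $K$-linear we get $\theta(\xi+\xi') = [(\ov{a_1+b_1},\ldots,\ov{a_n+b_n})^\prime] = \theta(\xi) + \theta(\xi') = \eta_\nu(u) + \eta_\nu(u')$.

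The hard part --- really the only point requiring care --- is the bookkeeping with the function $L$ under addition: one must be sure the representatives can be chosen so that $L(\xi) = L(\xi') = \nu$ and simultaneously that $L(\xi+\xi')$ does not drop below $\nu$. This is exactly where the subspace property of $\F_\nu$ and the definition of $L(x+x')$ as a minimum over representatives are used, via the chain $L(x+x') \leq L(\xi+\xi') \leq \nu$. Granting that, the argument reduces to the uniqueness of normal forms (Proposition \ref{normal}) and the evident additivity of normal forms sharing a denominator; no deeper input is needed beyond what is already packaged in Propositions \ref{B}, \ref{indep-1} and \ref{indep-2}.
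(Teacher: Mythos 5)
Your proposal is correct and follows essentially the same route as the paper: reduce to computing $\theta$ on well-chosen representatives with $L(\xi)=L(\xi')=\nu$, use the chain $\nu = L(x+x') \leq L(\xi+\xi') \leq \nu$ to pin down $L(\xi+\xi')$, and then exploit uniqueness of normal forms (which add coordinate-wise over the common denominator $f^\nu$) together with the scalar case via Proposition \ref{prop-l}. Your explicit check that $f$ cannot divide every $a_j+b_j$ is a detail the paper leaves implicit, but otherwise the two arguments coincide.
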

\begin{proof}
Let $u,u^\prime \in \F_{\nu}/\F_{\nu -1}$.
We first show $\eta_\nu(\alpha u) = \alpha \eta_\nu(u)$ for all $\alpha \in K$. We have nothing to show if $\alpha = 0$ or if $u = 0$. So assume $\alpha \neq 0$ and $u \neq 0$.
Say $u = x + \F_{\nu-1}$. Then $\alpha u = \alpha x + \F_{\nu -1}$. As $\widetilde{\theta}(\alpha x) = \alpha \widetilde{\theta}(x)$ we get the result. 

Next we show that $\eta_\nu(u + u^\prime) = \eta_\nu(u) + \eta_\nu(u^\prime)$. We have nothing to show if $u$ or $u^\prime$ is zero. 
Next we consider the case when $u + u^\prime  = 0$. Then $u = -u^\prime$. So $\eta_\nu(u) = - \eta_\nu(u^\prime)$. Thus in this case
\[
\eta_{\nu}(u + u^\prime) = 0 = \eta_{\nu}(u)  + \eta_\nu(u^\prime).
\]
Now consider the case when $u, u^\prime$ are non-zero and $u + u^\prime$ is non-zero. Say $u = x + \F_{\nu-1}$ and $u^\prime = x^\prime + \F_{\nu -1}$. Note that
as $u + u^\prime$ is non-zero $x + x^\prime \in \F_{\nu} \setminus \F_{\nu-1}$.  Let $x = [\xi] $ and $x^\prime = [ \xi^\prime]$ where $\xi,\xi^\prime \in Z_1(\partial;R_f)_{-\omega}$ and $L(\xi) = L(\xi^\prime) = \nu$. Then $x+ x^\prime = [ \xi + \xi^\prime]$. Note that $L(\xi + \xi^\prime) \leq \nu$ by Proposition \ref{prop-l}. But $L(x+x^\prime) = \nu$. So $L(\xi + \xi^\prime) = \nu$.
 Let  $(a_1/f^{\nu},\ldots,a_n/f^\nu)^\prime$, $(a_1^\prime/f^\nu, \ldots,a_n^\prime/f^\nu)^\prime$  be normal forms of
$\xi$ and $ \xi^\prime$ respectively.  Note that $( (a_1+a_1^\prime)/f^\nu,\ldots, (a_n + a_n^\prime)/f^\nu)^\prime$ is normal form of $\xi + \xi^\prime$.
It follows that $\theta(\xi +\xi^\prime) = \theta(\xi) + \theta(\xi^\prime)$. Thus $\widetilde{\theta}(x + x^\prime) = \widetilde{\theta}(x) + \widetilde{\theta}(x^\prime)$. Therefore
\[
\eta_\nu(u + u^\prime) = \eta_\nu(u) + \eta_\nu(u^\prime).
\]
\end{proof}

Finally we have the main result of this section
\begin{proof}[Proof of Theorem 2]
Let $\nu \geq 1$.  By Proposition \ref{linear} we know that $\eta_\nu$ is a linear map of $K$-vector spaces. We now prove that $\eta_\nu$ is injective. 

Suppose if possible $\eta_\nu$ is not injective. Then there exists non-zero $u \in \F_\nu/\F_{\nu-1}$ with $\eta_\nu(u) = 0$.  Say $u = x + \F_{\nu -1}$. Also
let $x = [\xi] $ where $\xi \in Z_1(\partial; R_f)_{-\omega}$ and $L(\xi) = L(x) = \nu$. Let $(a_1/f^\nu,\ldots,a_n/f^\nu)^\prime$ be the normal form of $\xi$. So we have
\[
0 = \eta_\nu(u) =  \widetilde{\theta}(x)  = \theta(\xi) = [(\ov{a_1},\ldots,\ov{a_n})^\prime].
\]
It follows that $(\ov{a_1},\ldots,\ov{a_n})^\prime = \psi_2(\ov{b})$ where $\ov{b} = ( \ov{b_{ij} }\mid 1\leq i < j \leq n )^\prime$.
it follows that for $l =1,\ldots,n$ 
\[
\ov{a_l} = \sum_{i<l} \ov{b_{il}}\frac{\partial f}{\partial X_i}  - \sum_{l>j} \ov{b_{lj}} \frac{\partial f}{\partial X_j}.
\]

It follows that for $l = 1,\ldots,n$   we have the following equation in $R$:
\begin{equation}\label{t2-eq1}
a_l  = \sum_{i<l} b_{il} \frac{\partial f}{\partial X_i}  - \sum_{l>j} b_{lj}\frac{\partial f}{\partial X_j}   + d_lf,
\end{equation}
for some $d_l \in R$. Note that the above equation is of homogeneous elements in $R$.
So we have the following 
\begin{equation}\label{t2-eq2}
\frac{a_l}{f ^\nu}  = \frac{\sum_{i<l} b_{il} \frac{\partial f}{\partial X_i} }{f^{\nu}} - \frac{\sum_{l>j} b_{lj}\frac{\partial f}{\partial X_j}}{f^\nu}   +  \frac{d_l}{f^{\nu-1}}.
\end{equation}
We consider two cases:

Case 1: $\nu \geq 2$.

Set $\widetilde{b_{ij}} =  -b_{il}/(c-1)$. Then note that 
\[
\frac{ b_{il} \frac{\partial f}{\partial X_i}}{f^\nu} = \frac{\partial}{\partial X_i} \left( \frac{\widetilde{b_{il}}}{f^{\nu-1}}\right)    - \frac{*}{f^{\nu-1}}.
\]

By \ref{t2-eq2} we have for $l = 1,\ldots,n$,
\[
\frac{a_l}{f^\nu} = \sum_{i<l}\frac{\partial}{\partial X_i} \left( \frac{\widetilde{b_{il}}}{f^{\nu-1}}\right)    -    
\sum_{l<j}\frac{\partial}{\partial X_j} \left( \frac{\widetilde{b_{lj}}}{f^{\nu-1}}\right)    + \frac{c_l}{f^{\nu-1}}.
\]
Put $\xi^\prime = (c_1/f^{\nu-1},\ldots,c_n/f^{\nu-1})^\prime$ and $\delta = ( \widetilde{b_{ij}}/f^{\nu-1} \mid 1 \leq i < j \leq n)$. Then we have 
\[
\xi = \phi_2(\delta) + \xi^\prime.
\]
 So we have $x = [\xi] = [\xi^\prime]$. This yields $L(x) \leq L(\xi^\prime) \leq \nu -1$. This is a contradiction.

Case 2: $\nu = 1$.

Note that $\xi \in (\K_1)_{-\omega}$. Thus for  $l = 1,\ldots,n$ we have 
\[
\frac{a_l}{f}  \in (R_f(\omega_l))_{-\omega} . 
\] 
 It follows that
 \[
 \deg a_l = \deg f - \sum_{k\neq l}\omega_k.
 \]
 Also note that $\deg \partial f/ \partial X_i = \deg f - \omega_i$.  By  comparing degrees in \ref{t2-eq1} we get
 $a_l = 0$ for all $l$. Thus $\xi = 0$. So $x = 0.$ Therefore $u = 0$ a contradiction.
\end{proof}

\section{Example 0.1}
Let $R = K[X_1,\ldots, X_n]$ and let $f = X_1^2+ \ldots+ X_{n-1}^2 + X_n^m$ with $m \geq 2$. Set $A = R/(f)$.
In this section we compute  $H_1(\partial;H^1_{(f)}(R))$. 

\s We give $\omega_i = \deg X_i = m$ for $i = 1,\ldots, n-1$ and $ \omega_n = \deg X_n = 2$. Note that $f$ is a homogeneous polynomial in $R$
of degree $2m$. Also note that $\omega = \sum_{k=1}^{n} \omega_k = (n-1)m + 2$.

\s  First note that the Jacaobian ideal $J$ of $f$ is primary to the unique graded maximal ideal of $R$. It follows that $A$ is an isolated singularity. Note $J = (X_1,\ldots,X_{n-1}, X_n^{m-1})$. Let $H_i(J;A)$ be the $i^{th}$-Koszul homology of $A$ \wrt \ $J$.

\begin{proposition}\label{HK}
The Hilbert series, $P(t)$, of $H_1(J;A)$ is 
\[
P(t) = \sum_{k = 0}^{m-2}t^{2m + 2j}.
\]
\end{proposition}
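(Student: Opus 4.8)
The plan is to identify $H_1(J;A)$ with a $\operatorname{Tor}$ module and compute it directly. First I would note that $\bx = X_1,\ldots,X_{n-1},X_n^{m-1}$ is an $R$-regular sequence: modulo $(X_1,\ldots,X_{n-1})$ the ring $R$ becomes $K[X_n]$, in which $X_n^{m-1}$ is a nonzerodivisor. Hence the Koszul complex $K(\bx;R)$ is a graded free resolution of $R/J$, so that for every $i$, as graded $R$-modules,
\[
H_i(J;A) = H_i\bigl(K(\bx;R)\otimes_R A\bigr) = \operatorname{Tor}_i^R(R/J,A).
\]

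Next I would compute $\operatorname{Tor}_1^R(R/J,A) = \operatorname{Tor}_1^R(A,R/J)$ from the short graded free resolution $0 \rt R(-2m) \xrightarrow{f} R \rt A \rt 0$, which is available since $f$ is a homogeneous nonzerodivisor of degree $2m$. Tensoring it with $R/J$ yields the two-term complex $(R/J)(-2m) \xrightarrow{\ov{f}} R/J$, so that $\operatorname{Tor}_1^R(A,R/J) = \ker\bigl((R/J)(-2m) \xrightarrow{\ov{f}} R/J\bigr)$. The one substantive observation is that $f \in J$: indeed $X_i^2 \in J$ for $i \le n-1$ because $X_i \in J$, and $X_n^m = X_n\cdot X_n^{m-1} \in J$. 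Hence $\ov{f} = 0$ in $R/J$, the displayed multiplication map is the zero map, and therefore
\[
H_1(J;A) \cong \operatorname{Tor}_1^R(R/J,A) \cong (R/J)(-2m)
\]
as graded $K$-vector spaces.

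Finally I would read off the Hilbert series. As a graded $K$-algebra $R/J = R/(X_1,\ldots,X_{n-1},X_n^{m-1}) \cong K[X_n]/(X_n^{m-1})$ with $\deg X_n = 2$, and this has $K$-basis $1, X_n, \ldots, X_n^{m-2}$ concentrated in degrees $0,2,\ldots,2(m-2)$; thus its Hilbert series is $\sum_{j=0}^{m-2} t^{2j}$. Shifting by $-2m$ multiplies the Hilbert series by $t^{2m}$, which gives
\[
P(t) = \sum_{j=0}^{m-2} t^{2m+2j},
\]
as claimed. I do not anticipate a real obstacle in this argument: the only step that is not routine bookkeeping is the remark $f \in J$, which makes the relevant $\operatorname{Tor}$ collapse to a degree shift of $R/J$, and even that is immediate from the shape of $f$ and of the generators of $J$.
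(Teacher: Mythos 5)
Your argument is correct, and it reaches the same answer by a genuinely different route than the paper. The paper checks that $X_1,\ldots,X_{n-1}$ is an $A$-regular sequence, sets $B = A/(X_1,\ldots,X_{n-1})A \cong K[X_n]/(X_n^m)$, and uses depth-sensitivity of the Koszul complex over $A$ to get an exact sequence $0 \rt H_1(J;A) \rt B(-2(m-1)) \xrightarrow{X_n^{m-1}} B$, whence $H_1(J;A) \cong (0:_B X_n^{m-1})(-2(m-1)) = X_nB(-2(m-1))$ and the Hilbert series is read off. You instead use regularity of the full sequence $X_1,\ldots,X_{n-1},X_n^{m-1}$ on $R$ to identify $H_i(J;A)$ with $\operatorname{Tor}_i^R(R/J,A)$, flip the Tor, and compute it from the resolution $0 \rt R(-2m) \xrightarrow{f} R \rt A \rt 0$; the key observation $f \in J$ makes the induced map zero, giving $H_1(J;A) \cong (R/J)(-2m)$, which is the same graded vector space since $X_nB(-2(m-1)) \cong \left(K[X_n]/(X_n^{m-1})\right)(-2m)$. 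Your route trades the paper's use of a regular sequence on $A$ (essentially the Cohen--Macaulayness of the hypersurface ring) for regularity on $R$ plus the hypersurface resolution, and it yields as a free bonus that $H_i(J;A) = 0$ for all $i \geq 2$ because $\operatorname{pd}_R A = 1$ --- a vanishing ($H_2(\partial(f);A)=0$) that the paper invokes separately in the proof of Proposition \ref{B}; the paper's computation, on the other hand, is a slightly more direct one-variable Koszul calculation.
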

\begin{proof}
It is easily verified that $X_1,\ldots,X_{m-1}$ is an $A$-regular sequence. Set 
$$B = A/(X_1,\ldots,X_{m-1})A = \frac{K[X_n]}{(X_n^m)} = K \oplus K X_n \oplus X_n^2 \oplus \cdots \oplus KX_n^{m-1}.$$

 Note that we have an exact sequence
 \[
 0 \rt H_1(J;A) \rt B(-2(m-1)) \xrightarrow{X_n^{m-1}} B.
 \]
 It follows that $H_1(J;A) = X_nB(-2(m-1))$. The result follows
\end{proof}

\s By Theorem 2 there exists 
 a filtration $\F = \{ \F_\nu \}_{\nu\geq 0}$ consisting of $K$-subspaces of $H_{1}(\bP; H^1_{(f)}(R))$ with $\F_\nu = H^{n-1}(\bP; H^1_{(f)}(R))$ for $\nu \gg 0$, $\F_\nu \supseteq F_{\nu-1}$ and $\F_0 = 0$ and injective $K$-linear maps
\[
\eta_\nu \colon \frac{\F_\nu}{\F_{\nu-1}} \longrightarrow H_{1}\left(\partial(f); A\right)_{(\nu+1)\deg f - \omega}.
\]
Notice
\[
(\nu+1)\deg f - \omega = (\nu+1)2m - (n-1)m - 2 = (2\nu - n + 3)m - 2.
\]
If $\eta_\nu \neq 0$ then by Proposition \ref{HK} it follows that
\[
(2\nu - n + 3)m - 2 = 2m + 2j  \quad \text{for some} \ j = 0,\ldots,m-2.
\]
So we obtain
\begin{equation}\label{comput}
2\nu m = (n-1)m + 2(j+1)
\end{equation}
It follows that $m$ divides $2(j+1)$. As $2(j+1) \leq 2m-2$ it follows that $2(j+1) = m$. Thus $m$ is even.

\s Say $m = 2r$. Then by equation (\ref{comput}) we have
\[
2\nu r = (n-1)r + r.
\]
So $\nu = n/2$. It follows that $n$ is even. Furthermore note that $\eta_\nu = 0$ for $\nu \neq n/2$ and that if $\nu = n/2$ then by \ref{HK}, $\dim \F_{n/2}/\F_{n/2 -1} \leq 1$. It follows that in this case $\dim H_1(\partial; H^1_{(f)}(R))\leq 1$.

\s In conclusion we have
\begin{enumerate}
\item
if $m$ is odd then
$H^{n-1}(\partial; H^1_{(f)}(R)) = 0$.
\item
if $m$ is even then
\begin{enumerate}
\item
If $n$ is odd then $H^{n-1}(\partial; H^1_{(f)}(R)) = 0$.
\item
If $n$ is even then $\dim_K H^{n-1}(\partial; H^1_{(f)}(R)) \leq 1$.
\end{enumerate}
\end{enumerate}
This proves Example 0.1.

\end{document}